\documentclass[pdflatex,sn-mathphys-ay]{sn-jnl}
\usepackage{graphicx}
\usepackage{mathtools}
\usepackage{placeins}
\usepackage{url}
\usepackage{amsmath}
\usepackage{float}      
\usepackage{tabularx}   
\usepackage{booktabs}   
\usepackage{mathrsfs} 
\usepackage{amsfonts} 
\usepackage{dsfont}
\usepackage{lipsum}
\usepackage{longtable}
\usepackage{subcaption}
\usepackage{amsthm}%
\usepackage{longtable}
\usepackage{caption} 
\usepackage[utf8]{inputenc} 
\usepackage{csquotes}
\usepackage{multirow}
\usepackage{hyperref}
\usepackage{natbib}
\usepackage[T1]{fontenc}
\usepackage[utf8]{inputenc} 
\usepackage{csquotes}
\newtheorem{theorem}{Theorem}
\newtheorem{lemma}[theorem]{Lemma}

\theoremstyle{thmstyleone}%
\theoremstyle{thmstyletwo}%

\theoremstyle{thmstylethree}%
\newtheorem{definition}{Definition}%

\begin{document}
\newgeometry{margin=1in}
\title[Tail Dependence]{Measuring Tail Dependence in Linear Processes: Theory and Empirics}


\author*[1]{\fnm{Debanjana} \sur{Datta}
}

\author[2]{\fnm{Diganta} \sur{Mukherjee}}

\affil[1]{\orgname{Indian Statistical Institute}, \city{Bengaluru}}
\affil[2]{\orgname{Indian Statistical Institute}, \city{Kolkata}}
\renewcommand\thefootnote{}\footnotetext{*Corresponding author: debnjanad@gmail.com}


\abstract{The quantitative analysis of financial time series often reveals two distinct features that standard Gaussian frameworks fail to capture: heavy-tailed marginal distributions and the phenomenon of extreme co-movements.While extreme value theory characterizes marginal behavior, Copulas provide a functional bridge to describe the dependence structure independently of the marginals. We are proposing a different way of looking at the joint extremes on the basis of a dependence measure. The proposed idea incorporates both the non-identical and identical regularly varying distributions. Informed by the analysis of some high-frequency cryptocurrency datasets, the effect of persistence property have been thoroughly studied under these setups. A detailed simulation study confirms our intuition and findings.}

\keywords{regularly varying distributions, tail, memory}



\maketitle

\section{Introduction}\label{sec1}
For a dynamic stochastic process, temporal dependence is usually studied using serial correlation. But an equally important dimension is clustering of exceedance at high thresholds.
This is a well-known phenomenon in econometric modeling where ARCH GARCH models are used. Correlation based dependence measures mainly refer to deviations around the mean whereas tail dependence or extremal dependence takes into account the joint extremes of the underlying distributions.

The modern financial landscape is increasingly characterized by \enquote{Black Swan} events—extreme market movements that occur with a frequency far exceeding the predictions of the Classical Central Limit Theorem. 
In finance, risk is traditionally quantified through measures such as \textbf{Value-at-Risk (VaR)} and \textbf{Expected Shortfall (ES)}. However, standard parametric estimations of these metrics often fail during periods of high contagion because they rely on linear correlation and thin-tailed distributions. EVT transcends these limitations by focusing exclusively on the tail regions. Some notable examples are:
\begin{itemize}
\item In corporate finance, a Collateralized Debt Obligation (CDO) is a complex structured financial product that pools together a cash-flow-generating set of assets—such as mortgages, bonds, or loans—and repackages them into discrete tranches based on risk and return. Extreme Value Theory(EVT) comes into rescue at the modeling of joint defaults—specifically, how the probability of multiple assets failing simultaneously increases during market stress. Insightful details regarding this can be found in \cite{garcia2003applications}, \cite{gulati2019analysis} and others.

\item In simple terms, reinsurance is \enquote{insurance for insurance companies}. It is a contract where one insurance company (the reinsurer) agrees to indemnify another insurance company (the cedant) for a portion of the losses it may incur under its active policies. This mechanism is a cornerstone of the global financial system because it prevents a single massive disaster. In the context of reinsurance, Extreme Value Theory (EVT) provides the statistical backbone for modeling \enquote{Ruin Probability} and pricing catastrophic layers. While standard insurance relies on the Law of Large Numbers, reinsurance deals with the Principle of a Single Big Jump, where one extreme event can dominate the total loss. For an overview, one can refer to \cite{morales2004approximation}, \cite{aviv2018extreme}, \cite{jung1964use} and others.

\item The catastrophic impact of Hurricane Katrina (2005) is a primary case study in Multivariate Extreme Value Theory (MEVT), specifically regarding how uncorrelated risks can suddenly become highly dependent during a \enquote{tail event}. \cite{liu2006joint}, \cite{gulati2019analysis} and others have provided insightful details in this area. In the context of Hurricane Katrina, the failure of the New Orleans levee system is a classic example of Tail Dependence. From a standard risk management perspective, a \enquote{Wind} event (Hurricane) and a \enquote{Flood} event (Levee Breach) were modeled as separate, relatively uncorrelated risks.
\end{itemize} 

To rigorously quantify these risks, we frequently appeal to concepts like \textbf{Regularly Varying Distributions} and \textbf{Extreme Value Theory (EVT)}. While traditional finance relies on the assumption of finite moments and normal innovations, empirical evidence from high-frequency cryptocurrency and equity markets suggests that the underlying stochastic processes are governed by heavy-tailed dynamics and asymptotic dependence.

EVT provides a non-parametric approach of modelling  heavy tailed observations where an obvious issue is to estimate the corresponding tail index which characterises the decay rate of the approximating power law like structure in the tail. Notable works in this area include \cite{geluk1996tails}, \cite{resnick1998tail}, \cite{mikosch2000supremum} and others. The concept of tail adversarial stability measure in a triangular array setting was first introduced by {\cite{zhang2021high}}. The tail adversarial q-stability condition concerns only about the upper tail of the distribution when compared to the strong mixing condition of \cite{rosenblatt1956central} and functional dependence framework of \cite{wu2005nonlinear}. It has been further documented in \cite{zhang2021high} that  q-stability mixing condition directly leads to the limit theorems in high quantile regression. 

The general problem of tackling correlated extreme events using the copula approach goes back to \cite{caperaa1997nonparametric}, \cite{zeevi2002beyond}, \cite{jajuga2005extreme}, \cite{demarta2005t} and others.
In this article, a new probabilistic perspective of defining tail cross correlation is addressed. We also emphasise on the two fundamental properties of linear processes, namely, one on the data generating mechanism and the other on the persistence. A second contribution is that we have formulated the theory based on both the iid setup and arbitrary mixture of regularly varying distributions with different tail indices. We also show that, regardless of the setup, the conditions for short memory vs. long memory persistence remains the same.

The rest of the paper is organized as follows. Section \ref{sec2} provides a basic setup for the rest of the theory. Section \ref{sec3} provides insight into the non-identical setup of regularly varying random variables whereas in Section \ref{TailCC}, our new measure of dependence have been proposed and different theorems have been stated. In both the sections, statistical analysis have been carried out using real data.
Section \ref{simulation} provides a detailed simulation study for both non-identical and identical cases. Proof of all lemmas and theorems are collected in an appendix.

\section{Baseline Result for iid setup}\label{sec2}

In the classical setup, which we first recount below, the tail properties are studied in an iid setting. We start with a few relevant definitions.

Let $(a_j)_{j \ge 0}, (b_j)_{j \ge 0}$ be two sequence of real coefficients.
Consider the additive linear processes 
\begin{align}
X_i = \sum_{j=0}^{\infty} a_j \epsilon^{(1)}_{i-j} \quad \textbf{ and } \quad
Y_i = \sum_{j=0}^{\infty} b_j \epsilon^{(2)}_{i-j}, \quad i \in \mathbb{Z}
\end{align}
where $(\epsilon^{(1)}_j)_{j \in \mathbb{Z}}$ and $(\epsilon^{(2)}_j)_{j \in \mathbb{Z}}$ are two independent sequences of i.i.d. innovation random variables.\\
Let $\epsilon^*_0$ be an innovation that has the same distribution as $\epsilon^{(1)}_0$, but is independent of $(\epsilon^{(1)}_j)_{j \in \mathbb{Z}}$ and $(\epsilon^{(2)}_j)_{j \in \mathbb{Z}}$.\\
Let $X_i^*,Y_i^*$ represent the coupled versions at time $i$ whose innovation at time zero is replaced by an i.i.d. copy $\epsilon^*_0$.
\begin{align}
X_i^* = \sum_{j=0, j \neq i}^{\infty} a_j \epsilon^{(1)}_{i-j} + a_i \epsilon^*_0, \quad \textbf{and} \quad
Y_i^* = \sum_{j=0, j \neq i}^{\infty} b_j \epsilon^{(2)}_{i-j} + b_i \epsilon^*_0, \quad i \in \mathbb{Z}
\end{align}

\begin{definition}
    A real-valued random variable $X$, or its distribution $F$, is said
to have \textbf{balanced regularly varying tails} with exponent $\alpha \ge 0$ 
if the nonnegative random variable $|X|$ is regularly varying with 
exponent $\alpha$ and for some $0 \le p, q \le 1$ with $p + q = 1$:
\[
\lim_{x \to \infty} \frac{P(X > x)}{P(|X| > x)} = p, \quad 
\lim_{x \to \infty} \frac{P(X < -x)}{P(|X| > x)} = q
\]
\end{definition}

\begin{definition}
    A positive Lebesgue measurable real-valued function L is said to be slowly varying at $\infty$ if 
    \[
    L(\lambda x) \sim L(x) \quad as \quad x \rightarrow \infty \quad \text{for each} \quad \lambda > 0.
    \]
\end{definition}
The following result acts as our benchmark.

\begin{theorem}[\cite{samorodnitsky2016stochastic}]
Let $\{X_n\}_{n \in \mathbb{Z}}$ be a sequence of i.i.d. random variables with balanced regularly varying tails with exponent $\alpha > 0$, where $P(X > x) \sim p P(|X| > x)$ and $P(X < -x) \sim q P(|X| > x)$ as $x \to \infty$. Let $(\rho_n)_{n \in \mathbb{Z}}$ be a sequence of real numbers satisfying:
\[
\begin{cases} 
\sum_{n=-\infty}^{\infty} |\rho_n|^{\alpha-\epsilon} < \infty & \text{for some } 0 < \epsilon < \alpha, \text{ if } \alpha \leq 2, \\
\sum_{n=-\infty}^{\infty} |\rho_n|^2 < \infty & \text{if } \alpha > 2.
\end{cases}
\]
Furthermore, if $\alpha > 1$, assume that either $E[X_0] = 0$ or the series $\sum_{n=-\infty}^{\infty} \rho_n$ converges. Then, the series 
\[
S = \sum_{n=-\infty}^{\infty} \rho_n X_n
\]
converges with probability 1. Moreover, $S$ has balanced regularly varying tails with exponent $\alpha$, satisfying:
\begin{align}
\lim_{x \to \infty} \frac{P(S > x)}{P(|X| > x)} &= \sum_{n=-\infty}^{\infty} \left[ p(\rho_n)_{+}^{\alpha} + q(\rho_n)_{-}^{\alpha} \right], \\
\lim_{x \to \infty} \frac{P(|S| > x)}{P(|X| > x)} &= \sum_{n=-\infty}^{\infty} |\rho_n|^{\alpha}.
\end{align}
\end{theorem}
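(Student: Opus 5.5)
The proof has two parts: first almost sure convergence of $S$, then the tail asymptotics. I would establish the tail result for finite sums, show the remainder of the series is negligible in the tail, and combine the two with a standard $\delta$-splitting argument.

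\emph{Convergence.} I would apply Kolmogorov's three-series theorem to the independent summands $\rho_n X_n$, using Karamata's theorem for truncated moments. For $\alpha\le 2$, Potter's bounds give $P(|\rho_n X_n|>1)\le C|\rho_n|^{\alpha-\epsilon}$ whenever $|\rho_n|$ is small. The truncated second moment $E[\rho_n^2X_n^2\mathbf 1(|\rho_nX_n|\le 1)]$ is bounded in the same way, so these two series converge. The truncated means are handled by cases:
\begin{itemize}
\item for $\alpha<1$, the truncated absolute first moment is of order $|\rho_n|^{\alpha-\epsilon}$;
\item for $\alpha>1$, either $E X_0=0$, so that $E[X\mathbf 1(|X|\le t)]=-E[X\mathbf 1(|X|>t)]=O(t^{1-\alpha+\epsilon})$, or $\sum\rho_n$ converges, so that the mean contribution $\rho_n E X_0$ is summable;
\item $\alpha=1$ is similar, with slowly varying corrections absorbed into $\epsilon$.
\end{itemize}
For $\alpha>2$ the variance is finite and $\sum\rho_n^2<\infty$, so $L^2$ convergence of the centred series together with the mean condition gives almost sure convergence by the Lévy--Itô--Nisio equivalence for sums of independent terms.

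\emph{Finite sums.} For a single term, $P(\rho X>x)/P(|X|>x)\to p\rho_+^\alpha+q\rho_-^\alpha$ directly from regular variation. For a finite sum of independent regularly varying variables, the tail of the sum is asymptotically the sum of the tails. This is the single-big-jump principle: the event that two summands both exceed $\delta x$ has probability $O(P(|X|>x)^2)$, and on its complement a sum larger than $x$ forces one summand above $(1-\delta)x$. Letting $\delta\to0$ and using $P(|X|>(1-\delta)x)/P(|X|>x)\to(1-\delta)^{-\alpha}$ gives both limits for $S_N=\sum_{|n|\le N}\rho_nX_n$.

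\emph{Tail of the remainder (main obstacle).} Writing $S=S_N+R_N$, I need
\[
\lim_{N\to\infty}\limsup_{x\to\infty}\frac{P(|R_N|>\delta x)}{P(|X|>x)}=0
\qquad\text{for each }\delta>0.
\]
I would truncate each term at level $x$: let $A$ be the event that some $|\rho_nX_n|>\delta x$ with $|n|>N$. By a union bound and Potter's bound, uniformly in small $|\rho_n|$,
\[
P(A)\le\sum_{|n|>N}P(|X|>\delta x/|\rho_n|)\le C\,P(|X|>x)\sum_{|n|>N}|\rho_n|^{\alpha-\epsilon}.
\]
On $A^c$ I would bound the truncated remainder. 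For $\alpha\le2$, Markov's inequality with a moment of order $\gamma\in(\alpha,2]$, or the second moment, is combined with Karamata, giving $x^{-\gamma}E|\rho_nX_n|^\gamma\mathbf 1(\cdot\le\delta x)\lesssim|\rho_n|^{\alpha-\epsilon}P(|X|>x)$. For $\alpha>2$, Chebyshev gives $\mathrm{Var}\le C\sum_{|n|>N}\rho_n^2\,x^{-2}$, which is $o(P(|X|>x))$ only if one is careful. Here I expect to need a Fuk--Nagaev type inequality, or a truncation at $x/\log x$ with higher moment bounds, to beat the polynomial tail. The truncated means are controlled using the mean-zero or summability assumption, and this centering bookkeeping is the delicate part.

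\emph{Combining.} The two bounds combine through
\[
P(S_N>(1+\delta)x)-P(|R_N|>\delta x)\le P(S>x)\le P(S_N>(1-\delta)x)+P(|R_N|>\delta x).
\]
Dividing by $P(|X|>x)$, I would let $x\to\infty$, then $N\to\infty$, then $\delta\to0$. This yields both stated limits, and with them the balanced regular variation of $S$.
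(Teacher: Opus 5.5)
The paper gives no proof of this statement; it is quoted as a benchmark from Samorodnitsky's monograph. Your architecture is the standard route for this result: single-big-jump asymptotics for the finite sums $S_N$, then uniform tail-negligibility of $R_N$, then the $\delta$-sandwich. Two repairs in the finite-sum step are harmless because $N$ is fixed when $\delta\to0$:
\begin{itemize}
\item if at most one of the $2N+1$ summands exceeds $\delta x$, then $S_N>x$ only forces that summand above $(1-2N\delta)x$, not $(1-\delta)x$;
\item you also need the matching lower bound.
\end{itemize}
The convergence part works. So does the remainder bound for $\alpha<2$: Chebyshev on the centred truncated sum, Karamata with $\gamma=2>\alpha$, then Potter. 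The centring there is routine rather than delicate. For instance, for $\alpha>1$ and $EX=0$, $|E[\rho_nX\mathbf 1(|\rho_nX|\le\delta x)]|\le C\delta x\,P(|X|>\delta x/|\rho_n|)$, and the sum of these over $|n|>N$ is $o(x)$.

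The genuine gap is the remainder estimate for $\alpha\ge2$. This is exactly where the weaker hypothesis $\sum\rho_n^2<\infty$ has to be shown sufficient.
\begin{itemize}
\item For $\alpha>2$ you correctly observe that Chebyshev gives only $O(x^{-2})$, which is not $o(x^{-\alpha}L(x))$, but you leave the step open.
\item For $\alpha=2$ your plan puts the case in the $\alpha\le2$ branch, where it fails. The range $\gamma\in(\alpha,2]$ is empty. Moreover $E[X^2\mathbf 1(|X|\le t)]$ is slowly varying with $t^2P(|X|>t)=o(E[X^2\mathbf 1(|X|\le t)])$, so your per-term Karamata bound is false at $\gamma=\alpha$. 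For example, if $P(|X|>x)=x^{-2}$, Chebyshev gives order $x^{-2}\log x$, not $o(x^{-2})$.
\end{itemize}

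What closes the gap for all $\alpha\ge2$ is a moment of order $\gamma>\alpha$ of the centred truncated remainder, controlled by Rosenthal's inequality. With $Y_n=\rho_nX_n\mathbf 1(|\rho_nX_n|\le\delta x)$,
\[
E\Bigl|\sum_{|n|>N}(Y_n-EY_n)\Bigr|^\gamma\le C_\gamma\Bigl[\sum_{|n|>N}E|Y_n|^\gamma+\Bigl(\sum_{|n|>N}EY_n^2\Bigr)^{\gamma/2}\Bigr].
\]
Since now $\gamma>\alpha$, Karamata and Potter legitimately bound the first term by $C_\delta\, x^\gamma P(|X|>x)\sum_{|n|>N}|\rho_n|^{\alpha-\epsilon'}$. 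Take $0<\epsilon'\le\alpha-2$ if $\alpha>2$ (summable because $\sum\rho_n^2<\infty$), and $\epsilon'\le\epsilon$ if $\alpha=2$. The second term is $O(1)$ for $\alpha>2$, and at most a constant times a slowly varying function of $x$ for $\alpha=2$. After dividing by $(\delta x)^\gamma$ it is therefore $o(P(|X|>x))$. The centring is handled as before.

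A Fuk--Nagaev inequality also works, applied to the centred truncated variables with level $y=x/k$, $k>\alpha/2$. The same choice of $\epsilon'$ is needed in your bound for $P(A)$ when $\alpha>2$, since the hypothesis then supplies no $\epsilon$.
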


\section{Non identical situation: empirical illustrations and extended results} {\label{sec3}}

The above discussion pertains to the iid setup. As mentioned before, our primary aim in this paper is to establish/extend analogous results in the finite mixture situation. Before we dive into the theoretical discussion, we first empirically verify whether such a situation is pertinent in real data settings. For this purpose, the statistical analysis presented in this article takes into consideration the closing prices of three highly traded cryptocurrencies namely Bitcoin (BTC), Ethereum (ETH) and Solana (SOL). High frequency data on the scale of 1 miliseconds is considered for the purpose of investigation. This data is publicly available at  \url{https://coincodex.com/crypto/bitcoin/historical-data/}. A basic histogram (see Figure \ref{fig2}) of the datasets shows bimodality. To gain insight to this visual representation, we have performed Hartigan's Dip test (test of unimodality vs bimodality \cite{hartigan1985dip}) and it supports our observation.
It may be suggestive of a possible mixture distribution with specific mixing probabilities.  These dataset exhibit changes in their distribution function, hence they are not iid. A rigorous analysis of deciseconds level data  of length 2306 for each of the three cryptocurrencies has been provided. Table \ref{Table51} identifies the breaks in the underlying structure, typically the distribution function of the three series respectively. Considering heavy tailed situations, three prominent members of this class are considered for the model fitting purpose namely, Pareto, Weibull and Cauchy. The goodness of fit has been assessed by AIC and BIC, as shown in columns 2-7 of Tables \ref{Table52}, \ref{Table53} and \ref{Table54} for the three cryptocurrencies considered. The best fitted models in different segments have been put together in column 8 of the same tables. \\
 
 Some typical naming conventions are as follows:
 \begin{flalign*}
\mathscr{P}(\alpha, \beta) &: \text{Pareto distribution with shape parameter } \alpha \text{ and scale } \beta &\\
\mathscr{C}(\theta, \gamma) &: \text{Cauchy distribution with location } \theta \text{ and scale } \gamma &\\
\mathscr{W}(k, \lambda)    &: \text{Weibull distribution with shape } k \text{ and scale } \lambda &\\
\mathscr{F}(\alpha, \beta, \sigma) &: \text{Fr\'{e}chet distribution with location } \alpha, \text{shape } \beta \text{ and scale } \sigma
\end{flalign*}

\begin{figure}[H]
\centering
\includegraphics[width=1\textwidth]{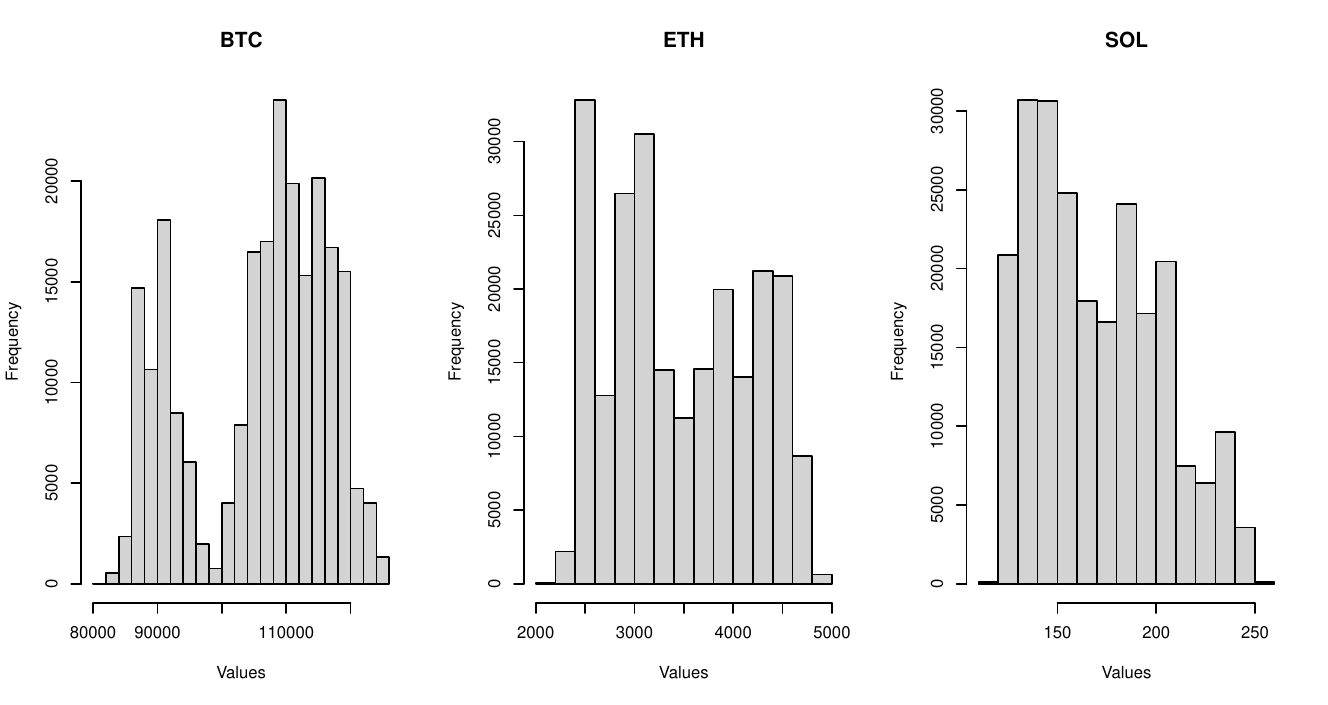}
\caption{Histogram of the three different cryptocurrencies BTC, ETH and SOL}
\label{fig2}
\end{figure}

\begin{center}
\small
\begin{longtable}{cccc}
\caption{Evidence of Non iid}{\label{Table51}}\\
\toprule
\textbf{} & \textbf{BTC} & \textbf{ETH} & \textbf{SOL} \\
\midrule
No of change points     & 5    & 5   & 5    \\
Locations     & (1673,463,1373,841,1973)   & (521,1581,821,1171,1951)    & (1663,876,1350,520,1963)    \\
\bottomrule
\end{longtable}
\end{center}

\begin{longtable}{l cc cc cc l}
\caption{Model Comparison with Robust and Standard Estimates BTC}{\label{Table52}} \\ 
\toprule
\multirow{2}{*}{Index} & \multicolumn{2}{c}{Pareto} & \multicolumn{2}{c}{Cauchy} & \multicolumn{2}{c}{Weibull} & \multirow{2}{*}{Best Fitted Model} \\
\cmidrule(lr){2-3} \cmidrule(lr){4-5} \cmidrule(lr){6-7} 
& AIC & BIC & AIC & BIC & AIC & BIC & \\
\midrule
\endfirsthead

\toprule
\multirow{2}{*}{Index} & \multicolumn{2}{c}{Pareto} & \multicolumn{2}{c}{Cauchy} & \multicolumn{2}{c}{Weibull} & \multirow{2}{*}{New Column} \\
\cmidrule(lr){2-3} \cmidrule(lr){4-5} \cmidrule(lr){6-7}
& AIC & BIC & AIC & BIC & AIC & BIC & \\
\midrule
\endhead

\bottomrule
\endfoot

1-463 & 11965.43 & 11969.57 & 8662.60 & 8670.87 & 8458.71* & 8466.99** & $\mathscr{W}(53.44,107687.61)$\\
464-841 & 9838.372 & 9842.306 & 6874.77* & 6882.64** & 6884.47 & 6892.34 & $\mathscr{C}(117941.67,917.50)$ \\
842-1373 & 13819.73 & 13824.01 & 10503.54* & 10512.1** & 10539 & 10547.56 &  $\mathscr{C}(113261.50)$ \\
1374-1673 & 7764.77 & 7768.47 & 5991.55 & 5998.95 & 5849* & 5856.40** & $\mathscr{W}(30.20,110100.62)$ \\
1674-1973 & 7658.62 & 7662.32 & 5769.57 & 5776.97 & 5746.15* & 5753.56** & $\mathscr{W}(28.30,92138.16)$ \\
1974-2306 & 8491.32 & 8495.13 & 6376.71 & 6384.33 & 6346.00* & 6353.62** & $\mathscr{W}(29.41,91581.76)$ \\

\bottomrule
\end{longtable}

\begin{longtable}{l cc cc cc l}
\caption{Model Comparison with Robust and Standard Estimates ETH}{\label{Table53}}  \\ 
\toprule
\multirow{2}{*}{Index} & \multicolumn{2}{c}{Pareto} & \multicolumn{2}{c}{Cauchy} & \multicolumn{2}{c}{Weibull}  & \multirow{2}{*}{Best Fitted Model}  \\
\cmidrule(lr){2-3} \cmidrule(lr){4-5} \cmidrule(lr){6-7}
& AIC & BIC & AIC & BIC & AIC & BIC &\\
\midrule
\endfirsthead 

\toprule
\multirow{2}{*}{Index} & \multicolumn{2}{c}{Column 2} & \multicolumn{2}{c}{Column 3} & \multicolumn{2}{c}{Column 4} \\
\cmidrule(lr){2-3} \cmidrule(lr){4-5} \cmidrule(lr){6-7}
& Sub A & Sub B & Sub A & Sub B & Sub A & Sub B \\
\midrule
\endhead 

\bottomrule
\endfoot 

1-521 & 9611.80 & 9616.05 & 6800.20* & 6808.71** & 7088.45 & 7096.96 &$\mathscr{C}(2532.55,64.61)$ \\
522-821 & 5773.24 & 5776.94 & 4390.30* & 4397.70** & 4467.58 & 4474.98 &$\mathscr{C}(3712.22,137.93)$\\
822-1171 & 6822.87 & 6826.73 & 4660.46 & 4668.18 & 4598.26* & 4605.97** & $\mathscr{W}(27.85,4525.77)$\\
1172-1581 & 7938.18 & 7942.20 & 5808.26 & 5816.29 & 5769.07* & 5777.10** &$\mathscr{W}(16.15,4209.42)$\\
1582-1951 & 6965.85 & 6969.76 & 5209.36 & 5217.19 & 5096.31* & 5104.13** &$\mathscr{W}(14.80,3240.72)$\\
1952-2306 & 6965.85 & 6969.76 & 4662.51 & 4670.25 & 4555.84* & 4563.58** &$\mathscr{W}(22.87,3138.62)$\\

\bottomrule
\end{longtable}

\begin{longtable}{l cc cc cc l}
\caption{Model Comparison with Robust and Standard Estimates SOL}{\label{Table54}}  \\ 
\toprule
\multirow{2}{*}{Index} & \multicolumn{2}{c}{Pareto} & \multicolumn{2}{c}{Cauchy} & \multicolumn{2}{c}{Weibull}  & \multirow{2}{*}{Best Fitted Model} \\
\cmidrule(lr){2-3} \cmidrule(lr){4-5} \cmidrule(lr){6-7}
& AIC & BIC & AIC & BIC & AIC & BIC & \\
\midrule
\endfirsthead 

\toprule
\multirow{2}{*}{Index} & \multicolumn{2}{c}{Column 2} & \multicolumn{2}{c}{Column 3} & \multicolumn{2}{c}{Column 4} \\
\cmidrule(lr){2-3} \cmidrule(lr){4-5} \cmidrule(lr){6-7}
& Sub A & Sub B & Sub A & Sub B & Sub A & Sub B \\
\midrule
\endhead 

\bottomrule
\endfoot 

1-520 & 6663.89 &  6668.15 & 4049.67 & 4058.18 & 4003.90* & 4012.41** &$\mathscr{W}(14.78,159.51)$\\
521-876 & 4671.91 & 4675.78 & 2804.70 & 2812.45 & 2729.18* & 2736.93** &$\mathscr{W}(18.05,186.24)$\\
877-1350 & 6393.60 & 6397.77 & 4202.53 & 4210.86 & 3989.45* & 3997.77** &$\mathscr{W}(15.05,224.88)$\\
1351-1663 & 4115.61 & 4119.36 & 2688.90 & 2696.39 & 2554.44* & 2561.93** &$\mathscr{W}(15.33,188.86)$\\
1664-1963 & 3761.83 & 3765.54 & 2012.20 & 2019.61 & 1976.68* & 1984.09** &$\mathscr{W}(22.30,139.52)$\\
1964-2306 & 4274.96 & 4278.80 & 2579.17 & 2586.84 & 2415.83* & 2423.50** &$\mathscr{W}(18.11,135.27)$\\

\bottomrule
\footnotetext{* Indicates the lowest AIC (best fit); ** Indicates the lowest BIC (best fit).}
\end{longtable}
\footnotetext{* Indicates the lowest AIC (best fit); ** Indicates the lowest BIC (best fit).}
Inspired by this empirical evidence, it becomes imperative to look into the mixture distributions, specifically at the tail probabilities.
Detection of tail index for finite mixture of regularly varying distributions with different indices has been an instance of prior importance. One of the ways to identify the tail index is to look at the logarithm of the survival function beyond a certain threshold. Theorem \ref{Thm} provides an insight on how these indices can be traced back from the observed data. Part (a) can be looked as an application of (\cite{embrechts2013modelling} Lemma 1.3.1).The bound provided in (c) takes into account the second order variation of the indices, an instance stronger than just looking at the first order variation. Bounds in (b) and (c) are non-comparable but they stand out in their individual importance.

\begin{theorem}{\label{Thm}}
Let $\mathbf{X} = (X_1, \dots, X_n)'$ be a vector of independent heavy-tailed random variables. The following asymptotic bounds hold for the log-tail probability of the linear combination $\mathbf{l}'\mathbf{X}$:

\begin{enumerate}
    \item[(a)] There exists $x^* \in \mathbb{R}$ such that for all $x \geq x^*$:
\begin{align*}
\log \mathbb{P}(\mathbf{l}'\mathbf{X} > x) \sim & -\alpha \left( \log\left(\frac{x}{l_*}\right)\mathbb{I}(l_*>0) + \log\left(\frac{x}{|l_*|}\right)\mathbb{I}(l_*<0) \right) + \\
& (\log(p) + \log L(x/l_*)) \mathbb{I}(l_* > 0) + \\
& (\log(q) + \log L(x/|l_*|)) \mathbb{I}(l_* < 0)
\end{align*}
    where $\alpha = \min_{j} \{\alpha_{j}\}$ and $l_*$ is the coefficient associated with the variable $X_i$ possessing the minimal tail index.

    \item[(b)] The log-tail probability is bounded by the summation of marginal indices:
    \[
    \log \mathbb{P}(\mathbf{l}'\mathbf{X} > x) \leq -\sum_{i=1}^{n} \alpha_{i} \log\left(\frac{x}{n l^*}\right) + \log L\left(\frac{x}{n l^*}\right)
    \]
    where $l^* = \min_i |l_i|$.

    \item[(c)] The log-tail probability is bounded by the moments of marginal indices::
    \[
    \log \mathbb{P}(\mathbf{l}'\mathbf{X} > x) \leq -\left( \frac{ \sum_{i=1}^{n}\alpha_i - \frac{1}{2}\sum_{i=1}^{n}\alpha_i^2 }{n} \right) \log\left(\sqrt{\frac{x^2}{\sum_{i=1}^{n}l_i^2}}\right) + \log(n) + \log L(x)
    \]
\end{enumerate}
\end{theorem}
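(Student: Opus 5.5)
The plan is to work under the standing interpretation that each $X_i$ has balanced regularly varying tails with index $\alpha_i$, in the sense of the definition in Section \ref{sec2}. Concretely, $P(X_i>x)\sim p_iL_i(x)x^{-\alpha_i}$ and $P(X_i<-x)\sim q_iL_i(x)x^{-\alpha_i}$. The symbol $L$ in the statement is read as a generic slowly varying function (products and ratios of the $L_i$ are again slowly varying), and all three parts are proved for large $x$.

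\textbf{Part (a).} I will reduce to the one-variable tail. Assume first that the minimal index $\alpha=\min_j\alpha_j$ is attained by a unique $X_{i_*}$, with coefficient $l_*$.
\begin{enumerate}
\item Compute the tail of $l_*X_{i_*}$. If $l_*>0$, then $P(l_*X_{i_*}>x)\sim p\,L(x/l_*)(x/l_*)^{-\alpha}$. If $l_*<0$, then $P(l_*X_{i_*}>x)=P(X_{i_*}<-x/|l_*|)\sim q\,L(x/|l_*|)(x/|l_*|)^{-\alpha}$.
\item Show that the remaining terms $l_iX_i$, $i\neq i_*$, have tails $o(x^{-\alpha}L(x))$, because $\alpha_i>\alpha$ and Potter bounds control the slowly varying factors.
\item Apply the convolution closure lemma (\cite{embrechts2013modelling}, Lemma 1.3.1) iteratively. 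It gives that adding an independent summand with a lighter tail does not change the tail asymptotics, so $P(\mathbf l'\mathbf X>x)\sim P(l_*X_{i_*}>x)$.
\item Take logarithms. Since $f\sim g$ with $g\to0$ implies $\log f=\log g+o(1)$, and hence $\log f\sim\log g$, this yields the displayed formula with the indicator split.
\end{enumerate}
If several variables share the minimal index, the same lemma gives a sum of such terms. The statement's single $l_*$ corresponds to the unique-minimizer case, and I would note that restriction.

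\textbf{Part (b).} The idea is a union/intersection comparison with $n$ events. I would bound the event $\{\mathbf l'\mathbf X>x\}$ using $x/n$ thresholds, so that independence factorizes the probability into $\prod_i P(|X_i|>x/(n l^*))$, each term comparable to $(x/(nl^*))^{-\alpha_i}L_i(\cdot)$. Taking logs turns the product into $-\sum_i\alpha_i\log(x/(nl^*))$ plus the log of a product of slowly varying functions, which is again slowly varying.

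The main obstacle is the factorization step. A sum exceeding $x$ only forces \emph{some} summand to exceed $x/n$, which gives a union, not an intersection. So the product bound as literally stated is not a consequence of a plain union bound. I would first try to prove it under the paper's intended reading, namely the joint-exceedance event that all scaled components exceed $x/(nl^*)$, where independence gives the product exactly. I would also flag that for the genuine sum the correct bound carries only $\min\alpha_i$, consistent with part (a).

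\textbf{Part (c).} I would use Cauchy--Schwarz: $\mathbf l'\mathbf X\le\|\mathbf l\|\,\|\mathbf X\|$. This gives $P(\mathbf l'\mathbf X>x)\le P(\|\mathbf X\|^2>x^2/\sum l_i^2)\le\sum_i P(X_i^2>x^2/(n\sum l_i^2))$, and the union bound produces the $\log n$ term. Each term behaves like $y^{-\alpha_i}L_i(y)$ with $y=\sqrt{x^2/\sum l_i^2}$, up to constants absorbed in $L$. I would bound $\max_i y^{-\alpha_i}$ by $y^{-\bar\alpha}$, and then use the elementary inequality $\alpha_i\ge\alpha_i-\alpha_i^2/2$ to pass to the averaged exponent $\frac1n\sum(\alpha_i-\alpha_i^2/2)$. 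The step $\max_i y^{-\alpha_i}\le y^{-\bar\alpha}$ is the delicate point, since the maximum is really governed by $\min_i\alpha_i$. I expect to need the second-order term $-\frac12\sum\alpha_i^2/n$ (for instance via $e^{-t}\le1-t+t^2/2$ applied after writing $y^{-\alpha_i}=e^{-\alpha_i\log y}$ and averaging) to make this inequality hold, and I would check carefully in which range of $y$ it is valid.
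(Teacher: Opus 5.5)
Your part (a) is the paper's argument: the heaviest tail dominates, then take logarithms. Your caveats (a unique minimiser, and $p>0$ resp.\ $q>0$) are the right ones. One technical note: the cited convolution-closure lemma is an equal-index statement. For an independent real-valued summand $Y$ with strictly lighter tail, argue directly via $\mathbb{P}(X>(1+\varepsilon)x)\,\mathbb{P}(Y>-\varepsilon x)\le\mathbb{P}(X+Y>x)\le\mathbb{P}(X>(1-\varepsilon)x)+\mathbb{P}(Y>\varepsilon x)$.

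The genuine gap is in (b) and (c), and it cannot be closed: both bounds are false as stated, and the paper's proofs break at the steps you were uneasy about.

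For (b), whenever (a) applies, $\log\mathbb{P}(\mathbf{l}'\mathbf{X}>x)=-\min_i\alpha_i\log x+o(\log x)$, while the right-hand side of (b) is $-(\sum_i\alpha_i)\log x+o(\log x)$. Concretely, take $X_1,X_2$ i.i.d.\ standard Pareto$(\alpha)$ and $\mathbf{l}=(1,1)$. Then $\mathbb{P}(X_1+X_2>x)\ge\mathbb{P}(X_1>x)=x^{-\alpha}$, whereas (b) asserts $\mathbb{P}(X_1+X_2>x)\le(x/2)^{-2\alpha}L(x/2)$. The paper's own example (i) is another instance: the true slope is $-1$ against a claimed $-3.414$.

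The paper does not use a joint-exceedance event. It correctly obtains $\mathbb{P}(\mathbf{l}'\mathbf{X}>x)\le1-\prod_i(1-p_i)$ with $p_i=\mathbb{P}(|l_iX_i|>x/n)$. It then claims this is governed by $\prod_ip_i$ (``the overall product dominates''). In fact $1-\prod_i(1-p_i)\sim\sum_ip_i$, which returns exactly the $\min_i\alpha_i$ bound you anticipated. Your fallback bounds the probability of $\bigcap_i\{|l_iX_i|>x/(nl^*)\}$, an event that does not contain $\{\mathbf{l}'\mathbf{X}>x\}$, so it says nothing about (b). The right conclusion is that (b) fails in general for $n\ge2$.

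For (c), your route is again the paper's: Cauchy--Schwarz, a union bound (where you rightly keep the factor $n$ that the paper drops), then $e^{-s}\le1-s+s^2/2$ with $s=\alpha_i\log y$. The step you flagged cannot be repaired, for two reasons.
\begin{itemize}
\item With $\bar\alpha=\frac1n\sum_i\alpha_i$, convexity of $\alpha\mapsto y^{-\alpha}$ gives $\max_iy^{-\alpha_i}\ge\frac1n\sum_iy^{-\alpha_i}\ge y^{-\bar\alpha}$. So your intermediate bound runs the wrong way.
\item With $t=\log y$ and $\overline{\alpha^2}=\frac1n\sum_i\alpha_i^2$, the Taylor step yields only $\frac1n\sum_iy^{-\alpha_i}\le1-\bar\alpha t+\frac12\overline{\alpha^2}\,t^2$. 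This tends to $+\infty$ and is vacuous.
\end{itemize}
The paper extracts its exponent only through the false replacement $(\log a)^2\approx\log a$.

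A counterexample to (c): take $n=2$, $\mathbf{l}=(1,1)$, $X_1$ Pareto with $\alpha_1=0.1$ and $X_2$ standard Cauchy ($\alpha_2=1$). Then $\mathbb{P}(X_1+X_2>x)\ge\mathbb{P}(X_1>2x)\,\mathbb{P}(X_2>-x)\ge c\,x^{-0.1}$. The exponent in (c) is $(1.1-0.505)/2=0.2975$, so (c) asserts $\mathbb{P}(X_1+X_2>x)\le2L(x)(x/\sqrt{2})^{-0.2975}$, which is false for large $x$.

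What your chain honestly gives is $\log\mathbb{P}(\mathbf{l}'\mathbf{X}>x)\le-\min_i\alpha_i\log x+o(\log x)$. This implies (c) for large $x$ as soon as $\min_i\alpha_i>\frac1n\sum_i(\alpha_i-\alpha_i^2/2)$. Since $\alpha-\alpha^2/2\le\frac12$, that holds in particular whenever $\min_i\alpha_i\ge\frac12$. In that regime, (c) is strictly weaker than what (a) already provides.
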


To illustrate the practical efficacy of the bounds established in Theorem \ref{Thm}, consider the following two examples: \\
\textbf{(i)} Let U $\sim$ $\mathscr{P}(2.414,1)$ and V $\sim$ $\mathscr{C}(0,1)$ independently. Define $W$=$U$+$\frac{1}{3}V$, then the estimate of slope by Theorem \ref{Thm}(a) and Theorem \ref{Thm}(b)  is -1 and -3.414 respectively.
Thus, Theorem \ref{Thm}(b)  has sharper slope bound than Theorem \ref{Thm}(a) .\\
\textbf{(ii)} Let R $\sim$ $\mathscr{P}(5,1)$ and define $S$=$U$+$\frac{1}{3}R$,, then the estimate of slope by Theorem \ref{Thm}(a)  and Theorem \ref{Thm}(c)  is -2.414 and -3.99 respectively. Thus, Theorem \ref{Thm}(c)  has sharper slope bound than Theorem \ref{Thm}(c).

\section{Tail cross-correlation and memory}\label{TailCC}

\subsection{Theory}
\label{TailCCTheory}

Having established the mixing possibility and its consequences in terms of tail properties, we now move on to the exceedance clustering issue. Our objective in this section is to define a measure of exceedance clustering at high thresholds and to study its memory retention properties at large lags.

Let $x_n, y_n \to \infty$ be an extremal threshold for the perturbed sequences $\{X_t^*\}$ and $\{Y_t^*\}$ respectively. The tail cross-correlation coefficient at lag $k$ is defined as:

\begin{equation}
\tau_{x_n, y_n}(k) = \frac{P(Y^{*}_{t+k} > y_n \mid X^{*}_{t} > x_n) - P(Y^{*}_{t+k} > y_n)}{1 - P(X^{*}_{t} > x_n)}
\end{equation}

We shall explore the persistence property under both the iid case and arbitrary mixture of independent distributions. Theorem \ref{thm:IID1} and Theorem \ref{thm:IID2} illustrates iid situation having the tail index $\alpha_1$ and $\alpha_2$ for $X^*$ and $Y^*$ series respectively whereas Theorem \ref{thm:NID1} and Theorem \ref{thm:NID2} does the non-iid part having $\alpha_0$ and $\alpha_i^{(2)}$ for $X^*$ and $Y^*$ series respectively.\\
For univariate observations, there exists a notion of ordering typically given by the quantiles whereas for high dimensional data, no such ordering exists and hence one relies on depth functions. \\
Consider, the following scenario where $U$ and $V$ are random variables with corresponding quantile sequences $\{u_n\}$ and $\{v_n\}$ respectively.      
If $U$ is much heavier-tailed than $V$, then for a large $n$, $\mathbb{P}(U > u_n) \gg \mathbb{P}(Y > u_n)$. The joint probability would be dominated entirely by the lighter tail of $V$, potentially masking the true relationship. By allowing $\{u_n\} \neq \{v_n\}$, one can calibrate the thresholds so that you are looking at the \enquote{top 1} of $U$ and the \enquote{top 1}" of $V$ simultaneously.
Since, in our setup $\{x_n\}_{n=1}^{\infty}$ and  $\{y_n\}_{n=1}^{\infty}$ are typically quantile sequences for $X^*$ and $Y^*$ respectively, we shall formulate the theorems initially on identical quantile sequences $\{x_n\}_{n=1}^{\infty}$ = $\{y_n\}_{n=1}^{\infty}$ and later on $\{x_n\}_{n=1}^{\infty}$ $\neq$ $\{y_n\}_{n=1}^{\infty}$. \\

We shall explore the persistence property under both the conditions, namely iid setup and non-iid setup. We need to focus on situations that exhibit short (long) term memory. Hence, the following lemma.
\begin{lemma}\label{mem.lemma}
The underlying process exhibits distinct memory structures depending on the decay rate of the coefficients $\{b_j\}$:
\begin{enumerate}
    \item \textbf{Short-Term Memory:} The process has short-range dependence if the coefficients decay exponentially:
    \begin{equation}
        b_j = \phi^j, \quad \text{where } |\phi| < 1.
    \end{equation}
    
    \item \textbf{Long-Term Memory:} The process exhibits long-range dependence (power-law decay) if:
    \begin{equation}
        b_j = j^{-\beta}, \quad \text{where } \beta > 0.
    \end{equation}
    
\end{enumerate}
\end{lemma}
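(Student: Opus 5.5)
The plan is to make the lemma precise by fixing a criterion for short versus long memory, and then to check each coefficient sequence against it. The natural yardstick for a linear process $Y_i=\sum_{j\ge0} b_j\epsilon^{(2)}_{i-j}$ is the lag-$k$ dependence between $Y_t$ and $Y_{t+k}$, i.e. between the coupled series $Y^*_t$ and $Y^*_{t+k}$ used in the definition of $\tau_{x_n,y_n}(k)$. This dependence is carried by the shared innovations and is governed by the tail of the coefficient sequence.

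If the innovations have a finite second moment, the autocovariance is $\gamma(k)=\sigma^2\sum_{j\ge0}b_jb_{j+k}$. In the heavy-tailed regime of the benchmark theorem of \cite{samorodnitsky2016stochastic}, with exponent $\alpha\le 2$, I would instead use the summability of $\sum_j |b_j|^{\alpha-\epsilon}$ together with the lag-$k$ weight $\sum_{j\ge0}|b_j b_{j+k}|^{\alpha/2}$ (or $\sum_{j\ge k}|b_j|^{\alpha}$). The latter quantity enters the tail of $Y_{t+k}$ through the common shocks.

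\textbf{Short memory.} Take $b_j=\phi^j$ with $|\phi|<1$.
\begin{itemize}
\item Every condition of the benchmark theorem holds, since $\sum_j|\phi|^{j(\alpha-\epsilon)}<\infty$, so the series converges almost surely.
\item The covariance is $\gamma(k)=\sigma^2\phi^k/(1-\phi^2)$, which is geometric in $k$.
\item In the heavy-tailed case, $\sum_{j\ge k}|b_j|^{\alpha}=|\phi|^{\alpha k}/(1-|\phi|^\alpha)$, also geometric.
\item Hence $\sum_k|\gamma(k)|<\infty$, or the analogous tail weight is summable in $k$, which is the standard definition of short-range dependence.
\end{itemize}

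\textbf{Long memory.} Take $b_j=j^{-\beta}$ for $j\ge1$, with $b_0$ set to $1$ by convention.
\begin{itemize}
\item First fix the admissible range of $\beta$. Almost sure convergence requires $\beta>1/2$ for $\alpha>2$, and $\beta(\alpha-\epsilon)>1$ for $\alpha\le2$. For smaller $\beta$ the lemma should be read as the process being non-stationary.
\item For the lag weight, compare the sum with an integral: $\sum_{j\ge1} j^{-\beta}(j+k)^{-\beta}\asymp k^{1-2\beta}$ when $1/2<\beta<1$. To see this, substitute $j=ku$, which gives $k^{1-2\beta}\int_0^\infty u^{-\beta}(1+u)^{-\beta}\,du$; the integral is finite exactly when $1/2<\beta<1$.
\item Similarly, $\sum_{j\ge k} j^{-\alpha\beta}\asymp k^{1-\alpha\beta}$.
\item Both decays are polynomial rather than exponential. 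For $1/2<\beta<1$ the covariances are not summable, since $\sum_k k^{1-2\beta}=\infty$, which is long-range dependence in the classical sense.
\end{itemize}

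\textbf{Main obstacle.} The hard step is that the lemma says long memory holds for every $\beta>0$, but this is not literally true under the summability definition. For $\beta>1$ the autocovariances are summable, even though they still decay hyperbolically. I would therefore try first to state the lemma's notion of long memory as power-law, non-geometric decay of the dependence measure, that is, $\log|\gamma(k)|/\log k$ tends to a finite negative constant. This distinguishes it sharply from the short-memory case, where $\log|\gamma(k)|/k\to\log|\phi|$. Under that definition the dichotomy holds for all admissible $\beta$. I would add the remark that the classical non-summable regime is $1/2<\beta<1$, and $1/\alpha<\beta<1$ in the heavy-tailed setting. The remaining care is to make the integral comparison uniform in $k$ near $u=0$, where the integrand is $u^{-\beta}$ and is integrable because $\beta<1$, and at $u=\infty$, where it behaves like $u^{-2\beta}$ and is integrable because $\beta>1/2$.
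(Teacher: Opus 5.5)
Your computations are correct for the criterion you fix, but your route differs from the paper's. The paper never works with autocovariances or convolution sums like $\sum_j b_jb_{j+k}$. It identifies ``memory'' with the lag-decay of its own tail cross-correlation $\tau$, and reads the dichotomy off the ratio already obtained in the proof of Theorem~\ref{thm:IID1}, $\tau(i+1)/\tau(i)\approx|b_{i+2}/b_{i+1}|^{\alpha_1}$. For $b_j=\phi^j$ this ratio is the constant $|\phi|^{\alpha_1}<1$, so $\tau$ decays geometrically. For $b_j=j^{-\beta}$ it is $((i+2)/(i+1))^{-\alpha_1\beta}\to1$, i.e.\ $\tau(k)$ decays like $k^{-\alpha_1\beta}$, and the paper calls this long memory without any summability requirement.

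Note that $\tau$ pairs $X^*_t$ with $Y^*_{t+k}$, not $Y^*_t$ with $Y^*_{t+k}$ as you write. Their only common shock is $\epsilon_0^*$, which is why $\tau(k)$ tracks the single coefficient $|b_{k+1}|^{\alpha_1}$ rather than a sum such as $\sum_j|b_jb_{j+k}|^{\alpha/2}$ or $\sum_{j\ge k}|b_j|^\alpha$. The qualitative dichotomy is the same either way, and your heavy-tailed weight even reproduces the paper's per-lag factor $|\phi|^{\alpha}$. The hyperbolic exponents, however, differ.

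Each route has its own merit:
\begin{itemize}
\item The paper's route is shorter and ties the classification directly to the measure used in Theorems~\ref{thm:IID1}--\ref{thm:NID2}. The cost is that it inherits the heuristic approximations and tail-domination conditions of Theorem~\ref{thm:IID1}, and it says nothing about existence of the process.
\item Your route is self-contained and checks the admissible range of $\beta$, which the lemma's ``$\beta>0$'' and the undefined $b_0=0^{-\beta}$ gloss over. It also correctly isolates that long memory for every admissible $\beta$ holds only under the power-law-decay definition. That is exactly the definition the paper's proof implicitly adopts.
\end{itemize}

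There are three minor slips in your write-up.
\begin{itemize}
\item Both of your heavy-tailed weights behave like $k^{1-\alpha\beta}$ for $1/\alpha<\beta<2/\alpha$, and they are non-summable in $k$ exactly when $1/\alpha<\beta\le 2/\alpha$. Your stated range $1/\alpha<\beta<1$ is instead the range where $\sum_j|b_j|=\infty$, which is a different convention.
\item In the finite-variance case, $\beta=1$ is also non-summable, since $\sum_{j\ge1}j^{-1}(j+k)^{-1}=H_k/k\sim(\log k)/k$, where $H_k$ is the $k$-th harmonic number.
\item For $\beta\le1$ and $\alpha>1$, the benchmark theorem additionally needs centred innovations, because $\sum_j b_j$ diverges. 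Your admissibility condition for $\beta$ should include this.
\end{itemize}
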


With the above characteristic coefficient structures, we now state the main results of our paper in four theorems representing the four possible combinations, as discussed above.

\begin{theorem}\textbf{(IID, Identical Quantile)} \label{thm:IID1}
\\
The tail cross-correlation coefficient $\tau(.)$ satisfies the following properties:

\begin{enumerate}
    \item \textbf{Monotonicity Property:} 
    Under the assumptions of Theorem 1, the sequence $\tau(.)$ is non-increasing, $\tau_{(i+1)} \leq \tau_{(i)}$, provided that $ \frac{|b_{i+2}|}{| b_{i+1}|} < 1$ and exactly one of the following conditions on the tail indices  and slowly varying functions $L(\cdot)$ holds:
    \begin{enumerate}
        \item $\alpha_2 > \alpha_1$
        \item $\frac{L_2(x)}{L_1(x)} = o\left(x^{-(\alpha_2 - \alpha_1)}\right) \quad \text{as } x \to \infty$
    \end{enumerate}

    \item \textbf{Asymptotic Ratio:} 
   Under the above mentioned conditions, the ratio of successive tail cross-correlations satisfies:
    \begin{equation}
        \frac{\tau_{(i+1)}}{\tau_{(i)}} \approx \left| \frac{b_{i+2}}{b_{i+1}} \right|^{\alpha_{1}}
    \end{equation}
\end{enumerate}
\end{theorem}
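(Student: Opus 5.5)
The idea is to reduce $\tau(k)$ to a single tail probability of $Y^*_{t+k}$, the one carried by the shared innovation $\epsilon^*_0$, and then read off the ratio of successive lags from the Samorodnitsky theorem stated in Section \ref{sec2}. In the coupled construction, $X^*_t$ and $Y^*_{t+k}$ share only the innovation $\epsilon^*_0$. All other summands are independent, because $(\epsilon^{(1)}_j)$ and $(\epsilon^{(2)}_j)$ are independent sequences. With the normalisation $a_0=1$ and $t=0$, $X^*_0 = \epsilon^*_0$, so we may write
\begin{equation*}
Y^*_{k} = b_k\epsilon^*_0 + R_k, \qquad R_k=\sum_{j\neq k} b_j\epsilon^{(2)}_{k-j},
\end{equation*}
where $R_k$ is independent of $\epsilon^*_0$. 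The indexing $\tau_{(i)}$ in the statement corresponds to $k=i+1$, so that $b_{i+1}$ and $b_{i+2}$ appear.

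Step 1 is to rewrite the numerator. We have
\begin{equation*}
P(Y^*_k>x_n\mid X^*_0>x_n)-P(Y^*_k>x_n) = \frac{P(Y^*_k>x_n,\,X^*_0>x_n)-P(Y^*_k>x_n)P(X^*_0>x_n)}{P(X^*_0>x_n)}.
\end{equation*}
Since the denominator $1-P(X^*_0>x_n)\to 1$, the asymptotics of $\tau$ are those of this covariance divided by $P(\epsilon^*_0>x_n)$.

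Step 2 decomposes the joint event according to which summand of $Y^*_k$ is large. By the single-big-jump principle underlying the Samorodnitsky theorem, $\{Y^*_k>x_n\}$ is asymptotically the union of $\{b_k\epsilon^*_0>x_n\}$ and $\{R_k>x_n\}$. The $R_k$ part is independent of $X^*_0$, so it cancels against the product term up to a negligible error. The $\epsilon^*_0$ part gives, when $b_k>0$ and with $a_0=1$,
\begin{equation*}
P(b_k\epsilon^*_0>x_n,\ \epsilon^*_0>x_n)\sim P(\epsilon^*_0>x_n/|b_k|)
\end{equation*}
for $|b_k|\le 1$. When $b_k<0$, we instead pick up the left-tail weight $q$. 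Hence
\begin{equation*}
\tau(k)\sim \frac{P(\epsilon^*_0>x_n/|b_k|)}{P(\epsilon^*_0>x_n)} \to |b_k|^{\alpha_1}\cdot c_\pm,
\end{equation*}
using regular variation of $\epsilon^*_0$ with index $\alpha_1$ and slowly varying function $L_1$.

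This is where conditions (a) and (b) enter. Since $\epsilon^*_0$ is distributed as $\epsilon^{(1)}$, the $\alpha_1$-tail enters $Y^*$ only through $b_k\epsilon^*_0$, while $R_k$ carries tail $\alpha_2$ with $L_2$. We need the cross term involving $R_k$ to be $o\bigl(x_n^{-\alpha_1}L_1(x_n)\bigr)$. This holds precisely when $x^{-\alpha_2}L_2(x)=o\bigl(x^{-\alpha_1}L_1(x)\bigr)$, i.e.\ under (a) $\alpha_2>\alpha_1$, or under (b) on the ratio of slowly varying functions. I would use the Samorodnitsky theorem (with its summability assumptions on $(b_j)$) to assert that $R_k$ has balanced regularly varying tail $\sum_{j\neq k}|b_j|^{\alpha_2}\,P(|\epsilon^{(2)}|>x)$.

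Step 3 takes the ratio at lags $i+2$ and $i+1$: the constants and $L_1$ cancel by slow variation, giving
\begin{equation*}
\frac{\tau_{(i+1)}}{\tau_{(i)}} \approx \left|\frac{b_{i+2}}{b_{i+1}}\right|^{\alpha_1}.
\end{equation*}
This is part 2. Monotonicity (part 1) follows because $|b_{i+2}/b_{i+1}|<1$ makes this ratio less than $1$ for $n$ large. If the signs of $b_{i+1}$ and $b_{i+2}$ differ, the $p/q$ weights enter, and I would either assume $p=q$ or carry the factor $p/q$ along.

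The main obstacle is Step 2: rigorously controlling the joint event $\{b_k\epsilon^*_0+R_k>x_n,\ \epsilon^*_0>x_n\}$ and showing that the mixed contributions are of smaller order. These are the contributions where neither summand alone exceeds $x_n$, or where $R_k$ is large while $\epsilon^*_0$ is moderately large. I would first split on $\{|R_k|\le \delta x_n\}$ versus its complement. On the first event I would sandwich between $P(\epsilon^*_0>(1\pm\delta)x_n/|b_k|)$. On the complement, independence gives a bound $P(|R_k|>\delta x_n)P(\epsilon^*_0>x_n)$, which is negligible relative to $P(\epsilon^*_0>x_n)$. Letting $\delta\to0$ and using uniform convergence of $L_1$ then completes the argument.
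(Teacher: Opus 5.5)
Your route is genuinely different from the paper's. The paper never analyses the joint exceedance. It bounds $P(Y^*>y\mid X^*>x)\le P(Y^*>y)/P(X^*>x)$, treats this bound as an approximation of $\tau$, and applies the Samorodnitsky theorem to each marginal. The tail of $Y^*$ then splits into an $\alpha_1$-piece $|b_{i+1}|^{\alpha_1}P(|\epsilon^{(1)}|>x)$ coming from $\epsilon^*_0$ and an $\alpha_2$-piece $\sum_{j}|b_j|^{\alpha_2}P(|\epsilon^{(2)}|>x)$. Conditions (a)/(b) are what the paper uses to make the second piece negligible against $x^{-\alpha_1}L_1(x)$. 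Monotonicity is read off from the sign of $\tau(i+1)-\tau(i)$, and the ratio from dividing the two expansions. This is short and reuses the marginal theorem verbatim, but it promotes an upper bound to an approximation: when $\alpha_2<\alpha_1$ the paper's expression diverges, while $\tau\le 1/(1-P(X^*>x))$ stays bounded.

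Your single-big-jump treatment of $\{Y^*_k>x_n,\,X^*_0>x_n\}$ computes the actual limit of $\tau$. That is more honest, and it exposes hypotheses the paper's argument hides, namely $|b_k|\le|a_0|$ and sign agreement. Note, however, that in your argument (a)/(b) play no role. After the $\delta$-split, the $R_k$-contribution to $\tau$ is at most $P(|R_k|>\delta x_n)\to0$ whatever $\alpha_2$ is, as your own last paragraph shows. So the claim that (a)/(b) are needed ``precisely'' is wrong; in your route they are simply superfluous.

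Two things must be corrected. First, $X^*_0\neq\epsilon^*_0$. By definition $X^*_0=a_0\epsilon^*_0+S$ with $S=\sum_{j\ge1}a_j\epsilon^{(1)}_{-j}$, which is independent of $\epsilon^*_0$ and of $R_k$.
\begin{itemize}
\item The normalising probability is $P(X^*_0>x_n)\sim C_a\,P(|\epsilon^{(1)}|>x_n)$ with $C_a=\sum_j[p(a_j)_+^{\alpha_1}+q(a_j)_-^{\alpha_1}]$, not $P(\epsilon^*_0>x_n)$.
\item Your split must also discard $\{|S|>\delta x_n\}$. This works, since $P(|S|>\delta x_n,\,Y^*_k>x_n)=P(|S|>\delta x_n)P(Y^*_k>x_n)=o(P(X^*_0>x_n))$.
\item The limit is then $\tau(k)\to p\,b_k^{\alpha_1}/C_a$ for $0<b_k\le a_0=1$. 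If $b_k>a_0$ the limit is $p/C_a$, so your restriction $|b_k|\le a_0$ is really needed.
\item $C_a$ cancels in the ratio, so part 2 survives.
\end{itemize}

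Second, your sign claim is false. If $b_k<0<a_0$, the events $\{a_0\epsilon^*_0>x_n\}$ and $\{b_k\epsilon^*_0>x_n\}$ are disjoint. Any joint exceedance then needs two independent big jumps, and $\tau(k)\to0$; no $q$-weighted term appears. Hence for alternating coefficients (e.g.\ $b_j=\phi^j$ with $\phi<0$) the ratio formula with absolute values fails, and assuming $p=q$ does not help. You must assume that $b_{i+1}$ and $b_{i+2}$ have the sign of $a_0$, and that $p>0$.
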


\begin{theorem}\textbf{(Non-IID, Identical Quantile)}\label{thm:NID1}
\\
The tail cross-correlation coefficient $\tau(.)$ satisfies the following properties:

\begin{enumerate}
    \item \textbf{Monotonicity Property:} 
    The sequence is non-increasing, $\tau_{(i+1)} \leq \tau_{(i)}$, provided that $\frac{|b_{i+2}|^{\alpha_0}-|b_{i+1}|^{\alpha_0}}{[p(a_1)_+^{\alpha_0}+q(a_1)_-^{\alpha_0}]}<1$ and exactly one of the following conditions on the tail indices $\alpha$ and slowly varying functions $L(\cdot)$ holds:
    \begin{enumerate}
        \item $ \alpha_i^{(2)}> \alpha_0 $ $\forall i$
        \item $\frac{L_i(x)}{L(x)} = o\left(x^{-(\alpha_i^{(2)} - \alpha_0)}\right) \quad \text{as } x \to \infty$
    \end{enumerate}

    \item \textbf{Asymptotic Ratio:} 
    Under the above mentioned conditions, the ratio of successive tail cross-correlations satisfies:
    \begin{equation}
        \frac{\tau_{(i+1)}}{\tau_{(i)}} \approx \left| \frac{b_{i+2}}{b_{i+1}} \right|^{\alpha_{0}}
    \end{equation}
\end{enumerate}
\end{theorem}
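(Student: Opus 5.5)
The plan is to reduce $\tau_{(i)}$ to a joint tail probability that is driven by the single shared innovation $\epsilon^*_0$. Then the tail asymptotics of Theorem 1 (the Samorodnitsky benchmark) can be applied term by term.

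Throughout, write $x_n=y_n$ for the common threshold. The denominator $1-P(X^*_t>x_n)\to 1$, so
\[
\tau_{(i)} \sim \frac{P(X^*_t>x_n,\,Y^*_{t+i}>x_n)}{P(X^*_t>x_n)} - P(Y^*_{t+i}>x_n).
\]
In the non-i.i.d. setup the innovations $\epsilon^{(2)}_j$ are independent with tail indices $\alpha^{(2)}_j$, while $\epsilon^{(1)}$ and the shared $\epsilon^*_0$ have index $\alpha_0$ and slowly varying function $L$.

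\textbf{Step 1: decompose each coupled variable.} Split
\[
X^*_t = A + a_t\epsilon^*_0, \qquad Y^*_{t+i} = B + b_{t+i}\epsilon^*_0 .
\]
Here $A$ and $B$ are independent of each other and of $\epsilon^*_0$, since they are built from the independent families $\epsilon^{(1)}$ and $\epsilon^{(2)}$. I fix the time index so that the coefficient on $\epsilon^*_0$ in $X^*$ is $a_1$, matching the normalising constant $p(a_1)_+^{\alpha_0}+q(a_1)_-^{\alpha_0}$ in the hypothesis. The coefficient in $Y^*$ at lag $i$ is then $b_{i+1}$.

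\textbf{Step 2: marginal tails.} I apply Theorem 1 to $X^*_t$, which is i.i.d. with index $\alpha_0$, to get $P(X^*_t>x)\sim c_X\,x^{-\alpha_0}L(x)$. For $Y^*$ I use the single-big-jump principle behind Theorem \ref{Thm}(a). Under condition (a), $\alpha^{(2)}_j>\alpha_0$, and under condition (b), $L_j/L=o(x^{-(\alpha^{(2)}_j-\alpha_0)})$. In either case every $\epsilon^{(2)}$ term has a tail of order $o(x^{-\alpha_0}L(x))$. The tail of $Y^*_{t+i}$ is then dominated by $b_{i+1}\epsilon^*_0$, giving
\[
P(Y^*_{t+i}>x)\sim \bigl[p(b_{i+1})_+^{\alpha_0}+q(b_{i+1})_-^{\alpha_0}\bigr]x^{-\alpha_0}L(x).
\]
A summability condition on the $b_j$ in the style of Theorem 1 is needed so that the infinite tail of $B$ remains negligible. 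This holds for both memory regimes of Lemma \ref{mem.lemma} once $\beta\alpha_0>1$.

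\textbf{Step 3: the joint exceedance (main obstacle).} The goal is to show
\[
P(X^*_t>x,\,Y^*_{t+i}>x)\sim P\bigl(a_1\epsilon^*_0>x,\ b_{i+1}\epsilon^*_0>x\bigr)\sim c\,\min(|a_1|,|b_{i+1}|)^{\alpha_0}x^{-\alpha_0}L(x),
\]
with the appropriate sign weight $p$ or $q$. I would argue this by a standard splitting. The event is contained in the union of three pieces: $\{|\epsilon^*_0|>(1-\delta)x/\max(|a_1|,|b_{i+1}|)\}$; $\{|A|>\delta x,\ |B|>\delta x\}$, whose probability is of order $x^{-2\alpha_0}$ by independence; and a mixed piece $\{|A|>\delta x,\ |\epsilon^*_0|>\delta x\}$, also of order $x^{-2\alpha_0}$. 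The last two are negligible. Then let $\delta\to 0$ using regular variation. The delicate part is controlling the infinite series $A$ and $B$ uniformly, for which I would use Potter bounds.

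\textbf{Step 4: the ratio and monotonicity.} Divide the joint probability by $P(X^*_t>x)$. For large lags, $|b_{i+1}|<|a_1|$ by the decay assumption, so the conditional probability tends to $|b_{i+1}|^{\alpha_0}/[p(a_1)_+^{\alpha_0}+q(a_1)_-^{\alpha_0}]$. The subtracted term $P(Y^*>x_n)$ vanishes. Hence
\[
\tau_{(i)}\approx \frac{|b_{i+1}|^{\alpha_0}}{p(a_1)_+^{\alpha_0}+q(a_1)_-^{\alpha_0}} .
\]
Taking successive quotients gives the asymptotic ratio $|b_{i+2}/b_{i+1}|^{\alpha_0}$. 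The stated condition on $(|b_{i+2}|^{\alpha_0}-|b_{i+1}|^{\alpha_0})/[\cdot]$ then shows $\tau_{(i+1)}-\tau_{(i)}\le 0$ asymptotically, provided $|b_{i+2}|\le|b_{i+1}|$. I expect to need this implicit monotonicity of $|b_j|$, which holds in both coefficient families of Lemma \ref{mem.lemma}, to make the argument go through.
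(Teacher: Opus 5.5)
Your route differs from the paper's. The paper never computes a joint exceedance. Mirroring its i.i.d.\ proof, which starts from $P(Y^*>y)/P(X^*>x)$, it writes $\tau_{(i)}$ directly as a single-big-jump expansion of the tail of $Y^*$ (the shared term $|b_{i+1}|^{\alpha_0}$ plus the $\epsilon^{(2)}$ terms) over $[p(a_1)_+^{\alpha_0}+q(a_1)_-^{\alpha_0}]P(|\epsilon^*_0|>x)$. It then uses (a)/(b) to discard the $\epsilon^{(2)}$ terms and takes quotients and differences; for monotonicity it only reaches $\tau_{(i+1)}-\tau_{(i)}<1$. Your decomposition $X^*=A+a_1\epsilon^*_0$, $Y^*=B+b_{i+1}\epsilon^*_0$, with $A,B,\epsilon^*_0$ independent, actually identifies $\lim_{x\to\infty}\tau_{(i)}$. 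That exposes what the heuristic hides:
\begin{itemize}
\item The limit is proportional to $\min(|a_1|,|b_{i+1}|)^{\alpha_0}$ and is $0$ when $a_1$ and $b_{i+1}$ have opposite signs. So the ratio formula needs $|b_{i+1}|,|b_{i+2}|\le|a_1|$ and $a_1,b_{i+1},b_{i+2}$ of one sign. For $b_j=\phi^j$ with $\phi<0$ the limits alternate between a positive value and $0$.
\item The hypothesis $(|b_{i+2}|^{\alpha_0}-|b_{i+1}|^{\alpha_0})/[\cdots]<1$ cannot give $\tau_{(i+1)}\le\tau_{(i)}$. Take $p=1$, $a_1=1$, $b_{i+1}=0.5$, $b_{i+2}=0.6$: the hypothesis holds, but the limits are proportional to $0.5^{\alpha_0}<0.6^{\alpha_0}$. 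So your extra assumption $|b_{i+2}|\le|b_{i+1}|$ is necessary, not a convenience.
\item Done carefully, your argument shows (a)/(b) are superfluous for identical thresholds. Each non-shared piece is a product of a factor $O(x^{-\alpha_0}L(x))$ and a factor $o(1)$, and for the $B$-factor this uses only that $B$ is a.s.\ finite.
\end{itemize}
State the first two sets of conditions explicitly rather than appealing to ``large lags''.

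Two repairs are needed.

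\textbf{Step 3 covering.} With $\max(|a_1|,|b_{i+1}|)$ in the first piece, your union is a valid cover but gives only the upper constant $\max(|a_1|,|b_{i+1}|)^{\alpha_0}$. That is $|a_1|^{\alpha_0}$ in exactly the case $|b_{i+1}|<|a_1|$ you use in Step~4, so it is useless there. Put $m=\min(|a_1|,|b_{i+1}|)$ in the first piece instead. Off that piece, $|A|>\delta x$ or $|B|>\delta x$ must hold. Besides your two pieces you then need $\{|B|>\delta x,\ |\epsilon^*_0|>\delta x\}$. This is precisely the piece that matters when $|b_{i+1}|<|a_1|$, since there $\epsilon^*_0$ can push $X^*$ over $x$ but not $Y^*$. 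Pair this with the lower bound $P(X^*>x,\,Y^*>x)\ge P(\epsilon^*_0>(1+\delta)x/m)\,P(A>-\delta x)\,P(B>-\delta x)$ for positive coefficients. No Potter bounds are needed, only $P(|A|>x)=O(x^{-\alpha_0}L(x))$ from Theorem~1.

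\textbf{Constants.} Your constants are inconsistent. If every $\epsilon^{(1)}_j$ has index $\alpha_0$, Step~2 gives $P(X^*>x)\sim\sum_j[p(a_j)_+^{\alpha_0}+q(a_j)_-^{\alpha_0}]\,x^{-\alpha_0}L(x)$, and the joint probability carries the weight $p$. So for $0<b_{i+1}\le a_1$ the limit is $p|b_{i+1}|^{\alpha_0}/\sum_j[\cdots]$, not $|b_{i+1}|^{\alpha_0}/[p(a_1)_+^{\alpha_0}+q(a_1)_-^{\alpha_0}]$. The $a_1$-only normaliser would require the other $\epsilon^{(1)}_j$ to be lighter-tailed than $\epsilon^*_0$. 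This constant cancels in the ratio and does not affect the sign of $\tau_{(i+1)}-\tau_{(i)}$. With these repairs and the added hypotheses, both conclusions follow.
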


Here, we shall present the theorems based on the generalization under $\{x_n\}$ and $\{y_n\}$. Also observe that the memory property remains the same.

\begin{theorem}\textbf{(IID, Non-Identical Quantile)}\label{thm:IID2}
\\
The tail cross-correlation coefficient $\tau(.)$ satisfies the following properties:

\begin{enumerate}
    \item \textbf{Monotonicity Property:} 
    The sequence is non-increasing, $\tau_{(i+1)} \leq \tau_{(i)}$, provided that $\frac{|b_{i+2}|^{\alpha_1}-|b_{i+1}|^{\alpha_1}}{\sum_{j=0}^{\infty} [p(b_j)_+^{\alpha_1}+q(b_j)_-^{\alpha_1}]}<\left( \frac{x}{y} \right)^{-\alpha_1}\frac{L(x)}{L(y)}$ and exactly one of the following conditions on the tail indices $\alpha$ and slowly varying functions $L(\cdot)$ holds:
 \begin{enumerate}
    \item $y^{-\alpha_2} =o\left( x^{-\alpha_1}\right)$ 
    \item $\frac{L_1(y)}{L(x)} = o\left( \frac{y^{-\alpha_2}}{x^{-\alpha_1}} \right) \quad \text{as } x,y \to \infty$
\end{enumerate}

    \item \textbf{Asymptotic Ratio:} 
    Under the conditions for similar sequences, the ratio of successive tail cross-correlations satisfies:
    \begin{equation}
        \frac{\tau_{(i+1)}}{\tau_{(i)}} \approx \left| \frac{b_{i+2}}{b_{i+1}} \right|^{\alpha_{1}}
    \end{equation}
\end{enumerate}
\end{theorem}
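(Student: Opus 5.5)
We reduce $\tau_{x_n,y_n}(k)$ to a ratio of tail probabilities and apply the benchmark theorem of \cite{samorodnitsky2016stochastic} (Theorem 1) to each piece.

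\textbf{Step 1: reduction to a ratio of tail probabilities.} Write $x=x_n$, $y=y_n$. The denominator is $1-P(X^*_t>x)\to 1$, so
\[
\tau_{(k)}\sim \frac{P(X^*_t>x,\,Y^*_{t+k}>y)}{P(X^*_t>x)}-P(Y^*_{t+k}>y).
\]
The only innovation shared by $X^*_t$ and $Y^*_{t+k}$ is the coupled copy $\epsilon^*_0$. It enters $X^*_t$ with coefficient $a_t$ and $Y^*_{t+k}$ with coefficient $b_{t+k}$; the index shift gives the $b_{i+1}$ appearing in the statement. All remaining summands are built from the independent sequences $\epsilon^{(1)}$ and $\epsilon^{(2)}$.

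\textbf{Step 2: single big jump for the joint exceedance.} The joint event $\{X^*_t>x,\,Y^*_{t+k}>y\}$ is asymptotically produced by a single large value of $\epsilon^*_0$. Its probability is
\[
P(b_{k}\epsilon^*_0>y)\sim \bigl[p(b_k)_+^{\alpha_1}+q(b_k)_-^{\alpha_1}\bigr]\,y^{-\alpha_1}L(y).
\]
Contributions from two separate big jumps, one in each independent sequence, are of product order. Condition (a), $y^{-\alpha_2}=o(x^{-\alpha_1})$, or condition (b) on the slowly varying functions, makes these contributions and the subtracted term $P(Y^*_{t+k}>y)$ negligible. For the marginal, Theorem 1 gives
\[
P(X^*_t>x)\sim \sum_j\bigl[p(b_j)_+^{\alpha_1}+q(b_j)_-^{\alpha_1}\bigr]\,x^{-\alpha_1}L(x),
\]
where the coefficient convention matches the displayed sum in the hypothesis. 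Combining,
\[
\tau_{(i)}\approx \frac{|b_{i+1}|^{\alpha_1}\,(x/y)^{\alpha_1}L(y)/L(x)}{\sum_j\bigl[p(b_j)_+^{\alpha_1}+q(b_j)_-^{\alpha_1}\bigr]} \;-\; c\,y^{-\alpha_2}L_1(y),
\]
where the second term is the residual $P(Y^*>y)$ contribution, with its constant $c$.

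\textbf{Step 3: ratio and monotonicity.} Taking successive quotients, the factors $(x/y)^{\alpha_1}$, $L$ and the normalising sum cancel, leaving
\[
\frac{\tau_{(i+1)}}{\tau_{(i)}}\approx \Bigl|\frac{b_{i+2}}{b_{i+1}}\Bigr|^{\alpha_1},
\]
which is part 2. For monotonicity, form the difference $\tau_{(i+1)}-\tau_{(i)}$. Its leading term is
\[
\frac{|b_{i+2}|^{\alpha_1}-|b_{i+1}|^{\alpha_1}}{\sum_j\bigl[p(b_j)_+^{\alpha_1}+q(b_j)_-^{\alpha_1}\bigr]}\,\Bigl(\frac{x}{y}\Bigr)^{\alpha_1}\frac{L(y)}{L(x)}.
\]
The stated hypothesis bounds this term by $1$ after rescaling. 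It should then be compared with the residual marginal terms, which match in order, to conclude $\tau_{(i+1)}\le\tau_{(i)}$.

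\textbf{Main obstacle.} The hardest part is making the single-big-jump approximation of the joint tail rigorous when $x\neq y$ grow at different rates. Specifically, one must show that the event where $\epsilon^*_0$ is moderately large while the remaining sums supply the rest is negligible, uniformly in the relation between $x$ and $y$. My first approach would be to truncate $\epsilon^*_0$ at $\delta\min(x/|a_t|,\,y/|b_{t+k}|)$. I would then use independence together with Theorem 1 applied separately to $X^*_t-a_t\epsilon^*_0$ and $Y^*_{t+k}-b_{t+k}\epsilon^*_0$, and finally let $\delta\to0$ using Potter bounds on $L$.
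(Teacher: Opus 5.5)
Your route is essentially the paper's: leading-order tails from Theorem~1, an $\alpha_2$-residual dismissed via (a)/(b), cancellation for the ratio, and a comparison of the leading part of $\tau_{(i+1)}-\tau_{(i)}$ with the displayed hypothesis. Your single-big-jump treatment of the joint exceedance is actually closer to the definition of $\tau$ than the paper's. The paper replaces $P(Y^*_{t+k}>y\mid X^*_t>x)$ by the upper bound $P(Y^*_{t+k}>y)/P(X^*_t>x)$ and then reads that bound as an approximation.

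\textbf{The monotonicity step does not go through.} Multiplying the hypothesis by $(x/y)^{\alpha_1}L(y)/L(x)>0$ shows that it says exactly that the leading term of $\tau_{(i+1)}-\tau_{(i)}$ is $<1$. That is not a sign condition, and no comparison with residual terms turns ``$<1$'' into ``$\le 0$''. Once residuals are negligible, the sign of the difference is that of $|b_{i+2}|^{\alpha_1}-|b_{i+1}|^{\alpha_1}$, so it is read off from your part~2. The displayed hypothesis cannot supply that sign. Take positive coefficients, $p>0$, $\alpha_2\ge\alpha_1$, $L_1=L$ constant and $y=x^2$:
\begin{itemize}
\item the right-hand side $(y/x)^{\alpha_1}=x^{\alpha_1}\to\infty$, so the hypothesis holds eventually for \emph{every} coefficient sequence;
\item (a) holds and (b) fails;
\item your Step~2 asymptotics apply, since the $Y$-threshold binds eventually, and give $\tau_{(i+1)}/\tau_{(i)}\to|b_{i+2}/b_{i+1}|^{\alpha_1}>1$ whenever $|b_{i+2}|>|b_{i+1}|$.
\end{itemize}
The paper's ``Hence, the conditions'' makes the same leap, so following it will not close the gap. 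Part~1 has to be derived from part~2 under the sign condition $|b_{i+2}|<|b_{i+1}|$, as in Theorem~\ref{thm:IID1}.

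\textbf{Two further points in Step~2.} First, the threshold that binds. For $a_t,b_{t+k}>0$ the single-jump joint probability is $P\bigl(\epsilon^*_0>\max\{x/a_t,\,y/b_{t+k}\}\bigr)$; it vanishes at first order if the signs differ. Writing it as $P(b_k\epsilon^*_0>y)$ presupposes $y/|b_{t+k}|\ge x/|a_t|$ at both lags $i$ and $i+1$. If instead the $X$-threshold binds (e.g.\ $x=y$ and $|b_{t+k}|>|a_t|$), the conditional probability tends to a lag-free constant and the ratio is $\approx 1$. So this must be an explicit hypothesis of part~2. Note also that the normaliser of $P(X^*_t>x)$ is the sum over $a_j$, not $b_j$.

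Second, your residual does not match (a)/(b). Set $X'=X^*_t-a_t\epsilon^*_0$ and $Y'=Y^*_{t+k}-b_{t+k}\epsilon^*_0$, which are independent of each other and of $\epsilon^*_0$. Then the numerator of $\tau$ equals $\mathrm{Cov}\bigl(\bar F_{X'}(x-a_t\epsilon^*_0),\,\bar F_{Y'}(y-b_{t+k}\epsilon^*_0)\bigr)/P(X^*_t>x)$. Consequently:
\begin{itemize}
\item the constant level $\bar F_{Y'}(y)\asymp y^{-\alpha_2}L_1(y)$ drops out exactly;
\item the $\epsilon^*_0$-part of $P(Y^*_{t+k}>y)$ is smaller than your leading term by a factor $\asymp P(X^*_t>x)\to 0$, without any appeal to (a)/(b).
\end{itemize}
What actually has to be shown is that the remaining error is $o\bigl((x/y)^{\alpha_1}L(y)/L(x)\bigr)$, which is not the comparison made in (a). This covariance identity is also the natural tool for the truncation argument you flag as the main obstacle.
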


\begin{theorem}\textbf{(Non-IID, Non-Identical Quantile)}\label{thm:NID2}
\\
The tail cross-correlation coefficient $\tau(.)$ satisfies the following properties:

\begin{enumerate}
    \item \textbf{Monotonicity Property:} 
    The sequence is non-increasing, $\tau_{(i+1)} \leq \tau_{(i)}$, provided that $\frac{|b_{i+2}|^{\alpha_0}-|b_{i+1}|^{\alpha_0}}{[p(a_1)_+^{\alpha_0}+q(a_1)_-^{\alpha_0}]}<\left( \frac{x}{y} \right)^{-\alpha_0}\frac{L(x)}{L(y)}$ and exactly one of the following conditions on the tail indices $\alpha$ and slowly varying functions $L(\cdot)$ holds:
 \begin{enumerate}
    \item $y^{-\alpha_i^{(2)}} =o \left( x^{-\alpha_0}\right) \quad \forall i$
    \item $\frac{L_i(y)}{L(x)} = o\left( \frac{y^{-\alpha_i^{(2)}}}{x^{-\alpha_0}} \right) \quad \text{as } x,y \to \infty$
\end{enumerate}

    \item \textbf{Asymptotic Ratio:} 
    Under the conditions for similar sequences, the ratio of successive tail cross-correlations satisfies:
    \begin{equation}
        \frac{\tau_{(i+1)}}{\tau_{(i)}} \approx \left| \frac{b_{i+2}}{b_{i+1}} \right|^{\alpha_{0}}
    \end{equation}
\end{enumerate}
\end{theorem}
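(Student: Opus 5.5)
The plan is to reduce $\tau_{x_n,y_n}(i)$ to a ratio of tail probabilities of the perturbed linear processes and then apply the benchmark Theorem of \cite{samorodnitsky2016stochastic}, in the mixture form given by Theorem \ref{Thm}(a), to every tail term. We write
\[
\tau_{x,y}(i)=\frac{P(X^*_t>x)^{-1}P(X^*_t>x,\,Y^*_{t+i}>y)-P(Y^*_{t+i}>y)}{1-P(X^*_t>x)} .
\]
The sequences $X^*$ and $Y^*$ are driven by independent innovations except for the common coupled innovation $\epsilon^*_0$. So the joint exceedance splits into two parts. The first is the event where the common component $\epsilon^*_0$ is large, which is the single-big-jump contribution. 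The second is the product of the marginal exceedances, which cancels exactly against $P(Y^*_{t+i}>y)$ up to lower order terms.

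In the non-iid setting, Theorem \ref{Thm}(a) shows that the tail of each linear combination is governed by the minimal index. For $X^*$ this is $\alpha_0$, with coefficient $a_1$, which gives $P(X^*>x)\sim [p(a_1)_+^{\alpha_0}+q(a_1)_-^{\alpha_0}]\,x^{-\alpha_0}L(x)$. For $Y^*_{t+i}$ the $\epsilon^*_0$-term carries coefficient $b_{i+1}$ (indexing as in the statement).

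The key step is a single-big-jump expansion of the joint probability. We have
\[
P(X^*_t>x,\,Y^*_{t+i}>y)=P(a\epsilon^*_0>x,\,b_{i+1}\epsilon^*_0>y)+P(X^*_t>x)P(Y^*_{t+i}>y)+o(\cdot).
\]
The first term is of order $\min(x/|a|,\,y/|b_{i+1}|)^{-\alpha_0}L(\cdot)$. Under condition (a), $y^{-\alpha_i^{(2)}}=o(x^{-\alpha_0})$ for all $i$, or under condition (b) on the slowly varying functions, the contributions of the other innovations of $Y$ with indices $\alpha_i^{(2)}$ are negligible relative to the $\alpha_0$-driven term. We then obtain
\[
\tau_{x,y}(i)\approx C\,|b_{i+1}|^{\alpha_0}\Big(\tfrac{x}{y}\Big)^{-\alpha_0}\frac{L(y)}{L(x)}\Big/\big[p(a_1)_+^{\alpha_0}+q(a_1)_-^{\alpha_0}\big],
\]
where $C$ is a constant not depending on $i$.

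The asymptotic ratio (part 2) follows at once, since every factor except $|b_{i+1}|^{\alpha_0}$ cancels in $\tau_{(i+1)}/\tau_{(i)}$, which gives $|b_{i+2}/b_{i+1}|^{\alpha_0}$. For monotonicity (part 1), we compute $\tau_{(i+1)}-\tau_{(i)}$ including the denominator correction $1-P(X^*>x)$ and the subtracted marginal $P(Y^*_{t+i+1}>y)-P(Y^*_{t+i}>y)$. The difference equals a positive multiple of
\[
\frac{|b_{i+2}|^{\alpha_0}-|b_{i+1}|^{\alpha_0}}{p(a_1)_+^{\alpha_0}+q(a_1)_-^{\alpha_0}}-\Big(\tfrac{x}{y}\Big)^{-\alpha_0}\frac{L(x)}{L(y)} .
\]
The stated hypothesis is exactly the condition that this quantity is negative.

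The main obstacle is making the single-big-jump decomposition rigorous uniformly in the two different thresholds $x\neq y$ and across mixture components with different indices. In particular, the cross terms in which $\epsilon^*_0$ is moderately large while some other innovation is also large must be shown to be $o(x^{-\alpha_0}L(x))$. My first attempt would be a truncation argument. On $\{|\epsilon^*_0|\le \delta x\}$, the joint event requires two independent large jumps, which has probability $O(x^{-2\alpha_0}L^2)$. On the complement, the remaining sums are tight and Breiman-type bounds apply. The summability conditions of the benchmark theorem control the tails of the infinite sums, and Potter bounds on $L$ handle the ratio $L(x)/L(y)$.
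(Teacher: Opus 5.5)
\textbf{The gap is in the monotonicity step, and better bookkeeping will not close it.} Your own expansion gives $\tau_{(i)}\approx K\,|b_{i+1}|^{\alpha_0}$ with $K>0$ independent of $i$. Hence $\tau_{(i+1)}-\tau_{(i)}\approx K\bigl(|b_{i+2}|^{\alpha_0}-|b_{i+1}|^{\alpha_0}\bigr)$. The factor $1-P(X^*_t>x)$ is common to both lags and tends to $1$. The marginals $P(Y^*_{t+i}>y)$ were already cancelled against the product term in your decomposition. So neither can produce the additive term $-(x/y)^{-\alpha_0}L(x)/L(y)$ you attribute to them. Since $(x/y)^{-\alpha_0}L(x)/L(y)\sim P(|\epsilon_0^*|>x)/P(|\epsilon_0^*|>y)$, negativity of your displayed quantity is equivalent to
\[
\frac{|b_{i+2}|^{\alpha_0}-|b_{i+1}|^{\alpha_0}}{p(a_1)_+^{\alpha_0}+q(a_1)_-^{\alpha_0}}\cdot\frac{P(|\epsilon_0^*|>y)}{P(|\epsilon_0^*|>x)}<1 .
\]
This bounds the leading term of $\tau_{(i+1)}-\tau_{(i)}$ by a positive constant, not by $0$. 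It is exactly what the paper's argument yields. The paper approximates $\tau(i)$ by $|b_{i+1}|^{\alpha_0}P(|\epsilon_0^*|>y)/\bigl([p(a_1)_+^{\alpha_0}+q(a_1)_-^{\alpha_0}]P(|\epsilon_0^*|>x)\bigr)$ plus $\alpha_i^{(2)}$-terms removed by (a) or (b). The conclusion it actually reaches (cf.\ the proof of Theorem \ref{thm:NID1}) is $\tau(i+1)-\tau(i)<1$. That does not give $\tau_{(i+1)}\le\tau_{(i)}$: any $|b_{i+2}|$ slightly larger than $|b_{i+1}|$ satisfies the hypothesis, and then your expansion gives $\tau_{(i+1)}>\tau_{(i)}$. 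Monotonicity from your expansion requires $|b_{i+2}|\le|b_{i+1}|$, as in Theorem \ref{thm:IID1}, and the stated hypothesis does not imply this.

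\textbf{Part 2 also needs explicit extra assumptions.} The shared-jump term $P(a_1\epsilon_0^*>x,\ b_{i+1}\epsilon_0^*>y)$ is of order $\max(x/|a_1|,\,y/|b_{i+1}|)^{-\alpha_0}L(\cdot)$, not $\min$. It also vanishes to first order when $a_1$ and $b_{i+1}$ have opposite signs. Hence $\tau_{(i)}\propto|b_{i+1}|^{\alpha_0}$ holds only when the signs of $a_1,b_{i+1},b_{i+2}$ agree and both $y/|b_{i+1}|\ge x/|a_1|$ and $y/|b_{i+2}|\ge x/|a_1|$. If instead $x/|a_1|$ is the binding threshold (with matching signs), then $\tau_{(i)}\to 1$ and the ratio is $\approx 1$. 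For alternating coefficients such as $b_j=\phi^j$ with $\phi<0$, your constant $C$ changes with $i$. Also $y^{-\alpha_0}L(y)/\bigl(x^{-\alpha_0}L(x)\bigr)=(x/y)^{\alpha_0}L(y)/L(x)$, so the exponent in your formula for $\tau_{x,y}(i)$ has the wrong sign.

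With these regime conditions added, your route is genuinely different from the paper's and more faithful to the definition of $\tau$. You write $\tau=\mathrm{Cov}(\mathbf{1}\{X^*_t>x\},\mathbf{1}\{Y^*_{t+i}>y\})/\mathrm{Var}(\mathbf{1}\{X^*_t>x\})$ and isolate the shared innovation $\epsilon_0^*$. The paper never touches the joint probability; it replaces $\tau(i)$ by a union-bound estimate of $P(Y^*>y)/P(X^*>x)$.
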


To generalize in detail the interpretation of these theorems, we note the following. The monotonicity property of the tail cross-correlation coefficient depends on two factors, namely tail index of the random innovations and the regularly varying function at $\infty$. The fraction of the tail cross correlation coefficient under any setup, iid as well as non-iid and under any quantile both identical and non-identical only depends on the tail index of the perturbing random innovation. It implies a reduction of complexity. Essentially, we are arguing that while the short-term dynamics of a functional time series can be incredibly messy, the long-term risk (the tails) is governed by a surprisingly small set of parameters.

\subsection{Illustrations}

To empirically verify the salience of our theoretical finding on the tail cross correlation, we now reach back to the cryptocurrency data sets used above.
We put forward the observations from our statistical analysis.
In the study of high-frequency financial time series and functional data, the interaction between persistence (Long Memory) and extreme co-movements (Tail Cross-Correlation) is fundamental to understanding market stability and contagion. Table \ref{Table6} provides estimates of hurst exponent and  Geweke and Porter-Hudak (GPH) estimate which shows that these three series have long memory property. The same has been verified using our proposed tail cross correlation coefficient in Table \ref{tab:tail_cross_corr_combined}. It is observed that the correlation values do not tail off even at very distant lags.
\begin{center}
\small
\begin{longtable}{cccc}
\caption{Evidence of Long Memory}{\label{Table6}}\\
\toprule
\textbf{} & \textbf{BTC} & \textbf{ETH} & \textbf{SOL} \\
\midrule
Hurst Exponent     & 0.927    & 0.934     & 0.930     \\
GPH Estimate     & 0.987    & 1.011    & 0.994     \\
\bottomrule
\end{longtable}
\end{center}
\begin{table}[htbp]
\centering
\caption{Tail Cross-Correlation Coefficients for BTC, ETH, and SOL at Multiple Lags}
\label{tab:tail_cross_corr_combined}
\footnotesize
\setlength{\tabcolsep}{3pt} 
\begin{tabular}{l ccc ccc ccc}
\toprule
& \multicolumn{3}{c}{\textbf{Lag 1}} & \multicolumn{3}{c}{\textbf{Lag 3}} & \multicolumn{3}{c}{\textbf{Lag 5}} \\
\cmidrule(lr){2-4} \cmidrule(lr){5-7} \cmidrule(lr){8-10}
\textbf{$(q1,q2)$} & \textbf{(BTC,ETH)} & \textbf{(ETH,SOL)} & \textbf{(BTC,SOL)} & \textbf{(BTC,ETH)} & \textbf{(ETH,SOL)} & \textbf{(BTC,SOL)} & \textbf{(BTC,ETH)} & \textbf{(ETH,SOL)} & \textbf{(BTC,SOL)} \\
\midrule
(0.75,0.75) & 0.422 & 0.737 & 0.298 & 0.421 & 0.737 & 0.297 & 0.421 & 0.737 & 0.297 \\
(0.75,0.85) & 0.363 & 0.563 & 0.266 & 0.362 & 0.563 & 0.266 & 0.362 & 0.563 & 0.266 \\
(0.75,0.95) & 0.153 & 0.200 & 0.196 & 0.153 & 0.200 & 0.196 & 0.153 & 0.200 & 0.196 \\
(0.85,0.75) & 0.275 & 0.753 & 0.159 & 0.275 & 0.753 & 0.159 & 0.275 & 0.753 & 0.159 \\
(0.85,0.85) & 0.265 & 0.611 & 0.154 & 0.265 & 0.611 & 0.154 & 0.265 & 0.611 & 0.154 \\
(0.85,0.95) & 0.103 & 0.327 & 0.018 & 0.103 & 0.327 & 0.018 & 0.102 & 0.327 & 0.018 \\
(0.95,0.75) & 0.637 & 0.743 & 0.557 & 0.637 & 0.743 & 0.557 & 0.637 & 0.743 & 0.557 \\
(0.95,0.85) & 0.587 & 0.408 & 0.565 & 0.587 & 0.408 & 0.565 & 0.587 & 0.408 & 0.565 \\
(0.95,0.95) & 0.219 & 0.339 & 0.098 & 0.219 & 0.338 & 0.098 & 0.220 & 0.338 & 0.098 \\
\midrule \midrule
& \multicolumn{3}{c}{\textbf{Lag 7}} & \multicolumn{3}{c}{\textbf{Lag 30}} & \multicolumn{3}{c}{\textbf{Lag 100}} \\
\cmidrule(lr){2-4} \cmidrule(lr){5-7} \cmidrule(lr){8-10}
\textbf{$(q1,q2)$} & \textbf{(BTC,ETH)} & \textbf{(ETH,SOL)} & \textbf{(BTC,SOL)} & \textbf{(BTC,ETH)} & \textbf{(ETH,SOL)} & \textbf{(BTC,SOL)} & \textbf{(BTC,ETH)} & \textbf{(ETH,SOL)} & \textbf{(B,S)} \\
\midrule
(0.75,0.75) & 0.421 & 0.737 & 0.297 & 0.419 & 0.736 & 0.297 & 0.413 & 0.736 & 0.293 \\
(0.75,0.85) & 0.362 & 0.563 & 0.266 & 0.362 & 0.564 & 0.265 & 0.360 & 0.564 & 0.263 \\
(0.75,0.95) & 0.153 & 0.200 & 0.196 & 0.154 & 0.200 & 0.196 & 0.154 & 0.200 & 0.196 \\
(0.85,0.75) & 0.274 & 0.753 & 0.159 & 0.274 & 0.754 & 0.159 & 0.271 & 0.754 & 0.159 \\
(0.85,0.85) & 0.265 & 0.611 & 0.154 & 0.265 & 0.609 & 0.154 & 0.265 & 0.609 & 0.153 \\
(0.85,0.95) & 0.102 & 0.327 & 0.018 & 0.102 & 0.328 & 0.018 & 0.101 & 0.328 & 0.018 \\
(0.95,0.75) & 0.637 & 0.743 & 0.557 & 0.635 & 0.742 & 0.557 & 0.629 & 0.742 & 0.560 \\
(0.95,0.85) & 0.587 & 0.408 & 0.565 & 0.585 & 0.410 & 0.565 & 0.578 & 0.410 & 0.565 \\
(0.95,0.95) & 0.220 & 0.338 & 0.097 & 0.219 & 0.335 & 0.097 & 0.216 & 0.335 & 0.094 \\
\bottomrule
\end{tabular}
\end{table}

\section{Simulation Studies for Mixture Situations} \label{simulation}

To highlight the robustness of our findings to alternative probability models, we undertake a detailed simulation study in this section. We have considered\[
Y_i^* = \sum_{j = i-5}^{i-1} b_j \epsilon^{(2)}_{i-j} + b_i \epsilon^*_0, \quad i \in \{1,2,3,4,5,6\}
\] along with the same notation as used in Section \ref{sec3}.
Table \ref{Table1} and \ref{Table2} provide simulations under iid setup. The innovations in the four parts of each of these are generated from $\mathscr{P}(3,1)$, $\mathscr{P}(10,1)$, $\mathscr{C}(0,1)$ and $\mathscr{C}(0,10)$ distributions respectively. The four different choices of perturbations considered here are $\mathscr{P}(2.414,1)$, $\mathscr{W}(1,1)$, $\mathscr{W}(0.5,1)$ and $\mathscr{F}(0,1,1)$.

Tables \ref{Table3} and \ref{Table4} show simulations in the independent (but not identical) setup. In both the tables wherever $\mathscr{P}(.,1)$ is mentioned, it means $\epsilon^{(2)}_{i}$ $\sim$ $\mathscr{P}(5-i+1,1)$ distribution and when
$\mathscr{C}(0,.)$ is mentioned, it means $\epsilon^{(2)}_{i}$ $\sim$ $\mathscr{C}(0,5-i+1)$ for i $\in$ $\{1,\ldots ,5\}$ distribution. The same perturbations have been followed as in Table \ref{Table1} and \ref{Table2}. \\

\begin{center}
\footnotesize
\setlength{\tabcolsep}{3pt}
\begin{longtable}{cc cccc cccc}
\caption{Short Term Memory Comparison using values of Hurst exponent}{\label{Table1}}\\
    \toprule
    & & \multicolumn{4}{c}{\textbf{$\mathscr{P}(3,1)$}} & \multicolumn{4}{c}{\textbf{$\mathscr{P}(10,1)$}} \\ 
    \cmidrule(lr){3-6} \cmidrule(lr){7-10}
    $\phi$ & $i$ & $\mathscr{P}(2.414,1)$ & $\mathscr{W}(1,1)$ & $\mathscr{W}{(.5,1)}$ & $\mathscr{F}(0,1,1)$ & $\mathscr{P}(2.414,1)$ & $\mathscr{W}(1,1)$ & $\mathscr{W}(.5,1)$ & $\mathscr{F}(0,1,1)$ \\
    \midrule
\endfirsthead

    \multicolumn{10}{c}{{\bfseries \tablename\ \thetable{} -- Continued from previous page}} \\
    \toprule
    & & \multicolumn{4}{c}{\textbf{$\mathscr{P}(3,1)$}} & \multicolumn{4}{c}{\textbf{$\mathscr{P}(10,1)$}} \\ 
    \cmidrule(lr){3-6} \cmidrule(lr){7-10}
    $\phi$ & $i$ & $\mathscr{P}(2.414,1)$ & $\mathscr{W}(1,1)$ & $\mathscr{W}(.5,1)$ & $\mathscr{F}(0,1,1)$ & $\mathscr{P}(2.414,1)$ & $\mathscr{W}(1,1)$ & $\mathscr{W}(.5,1)$ & $\mathscr{F}(0,1,1)$ \\
    \midrule
\endhead

    \bottomrule
\endfoot

    \multirow{6}{*}{0.1} 
    & 1 & 0.478 & 0.478     & 0.478     & 0.499     & 0.478 & 0.478     & 0.478     & 0.499     \\
   & 2 & 0.478 & 0.478     & 0.478     & 0.499     & 0.478 & 0.478     & 0.478     & 0.499   \\
   & 3 & 0.478 & 0.478     & 0.478     & 0.499     & 0.478 & 0.478     & 0.478     & 0.499    \\
   & 4 & 0.478 & 0.478     & 0.478     & 0.499     & 0.478 & 0.478     & 0.478     & 0.499     \\
   & 5 & 0.478 & 0.478     & 0.478     & 0.499     & 0.478 & 0.478     & 0.478     & 0.499 \\
   & 6 & 0.478 & 0.478     & 0.478     & 0.499     & 0.478 & 0.478     & 0.478     & 0.499\\ \midrule
    \multirow{6}{*}{0.25} 
    & 1 & 0.476 & 0.476     & 0.476     & 0.497     & 0.476 & 0.476     & 0.476     & 0.497    \\
    & 2 &  0.476 & 0.476     & 0.476     & 0.497   & 0.476 & 0.476     & 0.476     & 0.497    \\
    & 3 & 0.476 & 0.476     & 0.476     & 0.497     & 0.476 & 0.476     & 0.476     & 0.497     \\
    & 4 &  0.476 & 0.476     & 0.476     & 0.497   & 0.476 & 0.476     & 0.476     & 0.497    \\
    & 5 & 0.476 & 0.476     & 0.476     & 0.497 & 0.476 & 0.476     & 0.476     & 0.497 \\
    & 6 & 0.476 & 0.476     & 0.476     & 0.497& 0.476 & 0.476     & 0.476     & 0.497 \\ 
    \midrule
    \multirow{6}{*}{0.5} 
    & 1 & 0.466 & 0.465 & 0.465     & 0.510     &  0.465 & 0.465 & 0.465     & 0.510      \\
    & 2 &  0.466 & 0.465 & 0.465     & 0.510     &  0.465 & 0.465 & 0.465     & 0.510     \\
    & 3 & 0.466 & 0.465 & 0.465     & 0.510     & 0.465 & 0.465 & 0.465     & 0.510     \\
    & 4 &  0.466 & 0.465 & 0.465     & 0.510     & 0.465 & 0.465 & 0.465     & 0.510     \\
    & 5 &  0.466 & 0.465 & 0.465     & 0.510 & 0.465 & 0.465 & 0.465     & 0.510  \\
    & 6 &  0.466 & 0.465 & 0.465     & 0.510  & 0.465 & 0.465 & 0.465     & 0.510 \\ \midrule

    \pagebreak[0]
 & & \multicolumn{4}{c}{\textbf{$\mathscr{C}(0,1)$}} & \multicolumn{4}{c}{\textbf{$\mathscr{C}(0,10)$}} \\ 
    \cmidrule(lr){3-6} \cmidrule(lr){7-10}
    $\phi$ & $i$ & $\mathscr{P}(2.414,1)$ & $\mathscr{W}(1,1)$ & $\mathscr{W}{(.5,1)}$ & $\mathscr{F}(0,1,1)$ & $\mathscr{P}(2.414,1)$ & $\mathscr{W}(1,1)$ & $\mathscr{W}(.5,1)$ & $\mathscr{F}(0,1,1)$ \\
     \midrule
    \multirow{6}{*}{0.1} 
    & 1 & 0.525 & 0.525     & 0.525     & 0.532     & 0.525 & 0.525     & 0.525     & 0.532    \\
    & 2 & 0.525 & 0.525     & 0.525     & 0.532    & 0.525 & 0.525     & 0.525     & 0.532    \\
    & 3 & 0.525 & 0.525     & 0.525     & 0.532     & 0.525 & 0.525     & 0.525     & 0.532    \\
    & 4 & 0.525 & 0.525     & 0.525     & 0.532    & 0.525 & 0.525     & 0.525     & 0.532    \\
    & 5 & 0.525 & 0.525     & 0.525     & 0.532 & 0.525 & 0.525     & 0.525     & 0.532 \\
    & 6 & 0.525 & 0.525     & 0.525     & 0.532     &0.525 & 0.525     & 0.525     & 0.532 \\ \midrule
    \multirow{6}{*}{0.25} 
    & 1 & 0.512 & 0.512     & 0.512     & 0.543     & 0.512 & 0.512     & 0.512     & 0.543      \\
    & 2 & 0.512 & 0.512     & 0.512     & 0.543     & 0.512 & 0.512     & 0.512     & 0.543     \\
    & 3 & 0.512 & 0.512     & 0.512     & 0.543      & 0.512 & 0.512     & 0.512     & 0.543      \\
    & 4 & 0.512 & 0.512     & 0.512     & 0.543      & 0.512 & 0.512     & 0.512     & 0.543      \\
    & 5 & 0.512 & 0.512     & 0.512     & 0.543 & 0.512 & 0.512     & 0.512     & 0.543  \\
    & 6 & 0.512 & 0.512     & 0.512     & 0.543      & 0.512 & 0.512     & 0.512     & 0.543  \\ \midrule
    \multirow{6}{*}{0.5} 
    & 1 & 0.503 & 0.501 & 0.501     & 0.551     & 0.501 & 0.501     & 0.501     & 0.550     \\
    & 2 &  0.503 & 0.501 & 0.501     & 0.551     & 0.501 & 0.501     & 0.501     & 0.550    \\
    & 3 &  0.503 & 0.501 & 0.501     & 0.551      & 0.501 & 0.501     & 0.501     & 0.550     \\
    & 4 &  0.503 & 0.501 & 0.501     & 0.551     & 0.501 & 0.501     & 0.501     & 0.550     \\
    & 5 &  0.503 & 0.501 & 0.501     & 0.551  & 0.501 & 0.501     & 0.501     & 0.550 \\
    & 6 &  0.503 & 0.501 & 0.501     & 0.551     & 0.501 & 0.501     & 0.501     & 0.550 \\
\end{longtable}
\end{center}

\begin{center}
\footnotesize
\setlength{\tabcolsep}{2.5pt}
\begin{longtable}{cc cccc cccc}
\caption{Long Term Memory Comparison using the values of Hurst exponent} \label{Table2} \\
    \toprule
    & & \multicolumn{4}{c}{\textbf{$\mathscr{P}(3,1)$}} & \multicolumn{4}{c}{\textbf{$\mathscr{P}(10,1)$}} \\ 
    \cmidrule(lr){3-6} \cmidrule(lr){7-10}
    $\beta$ & $i$ & $\mathscr{P}(2.414,1)$ & $\mathscr{W}(1,1)$ & $\mathscr{W}(.5,1)$ & $\mathscr{F}(0,1,1)$ & $\mathscr{P}(2.414,1)$ & $\mathscr{W}(1,1)$ & $\mathscr{W}(.5,1)$ & $\mathscr{F}(0,1,1)$\\
    \midrule
\endfirsthead

    \toprule
    & & \multicolumn{4}{c}{\textbf{$\mathscr{C}(0,1)$}} & \multicolumn{4}{c}{\textbf{$\mathscr{C}(0,10)$}} \\ 
    \cmidrule(lr){3-6} \cmidrule(lr){7-10}
    $\beta$ & $i$ & $\mathscr{P}(2.414,1)$ & $\mathscr{W}(1,1)$ & $\mathscr{W}(.5,1)$ & $\mathscr{F}(0,1,1)$ & $\mathscr{P}(2.414,1)$ & $\mathscr{W}(1,1)$ & $\mathscr{W}(.5,1)$ & $\mathscr{F}(0,1,1)$ \\
    \midrule
\endhead

    \bottomrule
\endfoot

    \multirow{6}{*}{2} 
    & 1 & 0.521 & 0.505 & 0.505     & 0.510     & 0.507 & 0.505 & 0.505     & 0.511     \\
    & 2 & 0.517 & 0.519 & 0.519 & 0.513 & 0.518 & 0.519 & 0.519     & 0.513     \\
    & 3 & 0.498 & 0.497 & 0.497 & 0.499 & 0.496 & 0.497 & 0.497 & 0.500 \\
    & 4 & 0.475 & 0.468 & 0.468 & 0.506 & 0.470 & 0.468 & 0.468 & 0.506 \\
    & 5 & 0.482 & 0.486 & 0.486 & 0.503 & 0.485 & 0.486 & 0.486 & 0.503 \\
    & 6 & 0.471 & 0.471 & 0.471 & 0.470 & 0.471 & 0.471 & 0.471 & 0.497 \\ \midrule
    \multirow{6}{*}{10} 
    & 1 & 0.525 & 0.495 & 0.495 & 0.513 & 0.506 & 0.495 & 0.495 & 0.513 \\
    & 2 & 0.520 & 0.520 & 0.520 & 0.508 & 0.520 & 0.520 & 0.520 & 0.508 \\
    & 3 & 0.491 & 0.491 & 0.491 & 0.497 & 0.491 & 0.491 & 0.491 & 0.497 \\
    & 4 & 0.488 & 0.488 & 0.488     & 0.510     & 0.488 & 0.488 & 0.488 & 0.510 \\
    & 5 & 0.547 & 0.547 & 0.547 & 0.482 & 0.547 & 0.547 & 0.547 & 0.482 \\
    & 6 & 0.478 & 0.478 & 0.478 & 0.499 & 0.478 & 0.478 & 0.478 & 0.499 \\ \midrule
    \multirow{6}{*}{20} 
    & 1 & 0.525 & 0.495 & 0.495 & 0.513 & 0.506 & 0.495 & 0.495 & 0.513 \\
    & 2 & 0.520 & 0.520 & 0.520 & 0.508 & 0.520 & 0.520 & 0.520 & 0.508 \\
    & 3 & 0.491 & 0.491 & 0.491 & 0.497 & 0.491 & 0.491 & 0.491 & 0.497 \\
    & 4 & 0.489 & 0.489 & 0.488     & 0.510     & 0.489 & 0.489 & 0.489 & 0.510 \\
    & 5 & 0.547 & 0.547 & 0.547 & 0.482 & 0.547 & 0.547 & 0.547 & 0.482 \\
    & 6 & 0.478 & 0.478 & 0.478 & 0.499 & 0.478 & 0.478 & 0.478 & 0.499 \\ \midrule
   \pagebreak[0]
 & & \multicolumn{4}{c}{\textbf{$\mathscr{C}(0,1)$}} & \multicolumn{4}{c}{\textbf{$\mathscr{C}(0,10)$}} \\ 
    \cmidrule(lr){3-6} \cmidrule(lr){7-10}
    $\beta$ & $i$ & $\mathscr{P}(2.414,1)$ & $\mathscr{W}(1,1)$ & $\mathscr{W}(.5,1)$ & $\mathscr{F}(0,1,1)$ & $\mathscr{P}(2.414,1)$ & $\mathscr{W}(1,1)$ & $\mathscr{W}(.5,1)$ & $\mathscr{F}(0,1,1)$ \\
     \midrule

    \multirow{6}{*}{2} 
    & 1 & 0.525 & 0.537 & 0.538 & 0.504 & 0.551 & 0.537 & 0.537 & 0.512 \\
    & 2 & 0.524 & 0.524 & 0.524 & 0.494 & 0.524 & 0.524 & 0.524     & 0.494     \\
    & 3 & 0.526 & 0.515 & 0.515 & 0.518 & 0.517 & 0.515     & 0.515     & 0.519     \\
    & 4 & 0.529 & 0.544 & 0.545 & 0.529 & 0.543 & 0.544 & 0.544 & 0.530 \\
    & 5 & 0.544 & 0.549 & 0.549 & 0.528 & 0.548 & 0.549 & 0.549 & 0.529 \\
    & 6 & 0.510 & 0.512 & 0.512 & 0.549 & 0.512 & 0.512 & 0.512 & 0.548 \\ \midrule
    \multirow{6}{*}{10} 
    & 1 & 0.525 & 0.535 & 0.536 & 0.505 & 0.511 & 0.531 & 0.532 & 0.524 \\
    & 2 & 0.513 & 0.513 & 0.513     & 0.513     & 0.513 & 0.513 & 0.513 & 0.513     \\
   & 3 & 0.509 & 0.509 & 0.509 & 0.520 & 0.509 & 0.509     & 0.509     & 0.520     \\
    & 4 & 0.545 & 0.545 & 0.545 & 0.530 & 0.545 & 0.545 & 0.545 & 0.530 \\
    & 5 & 0.555 & 0.556 & 0.556 & 0.492 & 0.556 & 0.556 & 0.556 & 0.492 \\
    & 6 & 0.531 & 0.531 & 0.531 & 0.526 & 0.531 & 0.531 & 0.531 & 0.526 \\ \midrule
    \multirow{6}{*}{20} 
 & 1 & 0.525 & 0.535 & 0.536 & 0.505 & 0.511 & 0.531 & 0.532 & 0.524 \\
    & 2 & 0.513 & 0.513 & 0.513     & 0.513     & 0.513 & 0.513 & 0.513 & 0.513     \\
    & 3 & 0.509 & 0.509 & 0.509 & 0.520 & 0.509 & 0.509     & 0.509     & 0.520     \\
    & 4 & 0.545 & 0.545 & 0.545 & 0.530 & 0.545 & 0.545 & 0.545 & 0.530 \\
    & 5 & 0.555 & 0.555 & 0.555 & 0.492 & 0.555 & 0.555 & 0.555 & 0.492 \\
    & 6 & 0.531 & 0.531 & 0.531 & 0.526 & 0.531 & 0.531 & 0.531 & 0.526 \\ 
\end{longtable}
\end{center}

\begin{center}
\footnotesize
\setlength{\tabcolsep}{2.5pt} 
\begin{longtable}{cc cccc cccc}
\caption{Short Term Memory Analysis under Varying Perturbations using the values of Hurst exponent} \label{Table3} \\
    \toprule
    & & \multicolumn{4}{c}{\textbf{ $\mathscr{P}(.,1)$}} & \multicolumn{4}{c}{\textbf{$\mathscr{C}(0,.)$}} \\ 
    \cmidrule(lr){3-6} \cmidrule(lr){7-10}
    $\phi$ & $i$ & $\mathscr{P}(2.414,1)$ & $\mathscr{W}(1,1)$ & $\mathscr{W}(.5,1)$ & $\mathscr{F}(0,1,1)$ & $\mathscr{P}(2.414,1)$ & $\mathscr{W}(1,1)$ & $\mathscr{W}(.5,1)$ & $\mathscr{F}(0,1,1)$ \\
    \midrule
\endfirsthead

    \toprule
    & & \multicolumn{4}{c}{\textbf{ $\mathscr{P}(.,1)$}} & \multicolumn{4}{c}{\textbf{ $\mathscr{C}(0,j+1)$}} \\ 
    \cmidrule(lr){3-6} \cmidrule(lr){7-10}
    $\phi$ & $i$ & $\mathscr{P}(2.414,1)$ & $\mathscr{W}(1,1)$ & $\mathscr{W}(.5,1)$ & $\mathscr{F}(0,1,1)$ & $\mathscr{P}(2.414,1)$ & $\mathscr{W}(1,1)$ & $\mathscr{W}(.5,1)$ & $\mathscr{F}(0,1,1)$ \\
    \midrule
\endhead

    \multirow{6}{*}{0.1} 
    & 1 & 0.477 & 0.477 & 0.477     & 0.498     & 0.517 & 0.517     & 0.517     & 0.537     \\
    & 2 & 0.477 & 0.477 & 0.477     & 0.498      & 0.517 & 0.517     & 0.517     & 0.537     \\
    & 3 & 0.477 & 0.477 & 0.477     & 0.498     & 0.517 & 0.517     & 0.517     & 0.537    \\
    & 4 & 0.477 & 0.477 & 0.477     & 0.498      & 0.517 & 0.517     & 0.517     & 0.537     \\
    & 5 & 0.477 & 0.477 & 0.477     & 0.498     & 0.517 & 0.517     & 0.517     & 0.537    \\
    & 6 & 0.477 & 0.477 & 0.477     & 0.498  & 0.517 & 0.517     & 0.517     & 0.537 \\ \midrule
    
    \multirow{6}{*}{0.25} 
    & 1 & 0.465 & 0.465    & 0.465     & 0.505     & 0.499 & 0.499     & 0.499     & 0.551     \\
    & 2 & 0.465 & 0.465    & 0.465     & 0.505     & 0.499 & 0.499     & 0.499     & 0.551    \\
    & 3 &0.465 & 0.465    & 0.465     & 0.505     & 0.499 & 0.499     & 0.499     & 0.551     \\
    & 4 & 0.465 & 0.465    & 0.465     & 0.505    & 0.499 & 0.499     & 0.499     & 0.551    \\
    & 5 &0.465 & 0.465    & 0.465     & 0.505    & 0.499 & 0.499     & 0.499     & 0.551     \\
    & 6 & 0.465 & 0.465    & 0.465     & 0.505 & 0.499 & 0.499     & 0.499     & 0.551\\ \midrule
    
    \multirow{6}{*}{0.5} 
    & 1 & 0.490 & 0.489     & 0.489     & 0.514     & 0.506 & 0.504 & 0.504 & 0.524 \\
    & 2 &  0.490 & 0.489     & 0.489     & 0.514      &0.506 & 0.504 & 0.504 & 0.524 \\
    & 3 & 0.490 & 0.489     & 0.489     & 0.514      & 0.506 & 0.504 & 0.504 & 0.524    \\
    & 4 & 0.490 & 0.489     & 0.489     & 0.514     &0.506 & 0.504 & 0.504 & 0.524     \\
    & 5 &  0.490 & 0.489     & 0.489     & 0.514     & 0.505 & 0.504 & 0.504 & 0.524    \\
    & 6 &  0.490 & 0.489     & 0.489     & 0.514  & 0.506 & 0.504 & 0.504 & 0.524 \\
    \bottomrule
\end{longtable}
\end{center}

\begin{center}
\footnotesize
\setlength{\tabcolsep}{2.5pt}
\begin{longtable}{cc cccc cccc}
\caption{Long Term Memory Analysis under Varying Perturbations using the values of Hurst exponent} \label{Table4} \\
    \toprule
    & & \multicolumn{4}{c} {\textbf{$\mathscr{P}(.,1)$}}& \multicolumn{4}{c}{\textbf{$\mathscr{C}(0,.)$}} \\ 
    \cmidrule(lr){3-6} \cmidrule(lr){7-10}
    $\beta$ & $i$ & $\mathscr{P}(2.414,1)$ & $\mathscr{W}(1,1)$ & $\mathscr{W}(.5,1)$ & $\mathscr{F}(0,1,1)$ & $\mathscr{P}(2.414,1)$ & $\mathscr{W}(1,1)$ & $\mathscr{W}(.5,1)$ & $\mathscr{F}(0,1,1)$ \\
    \midrule
\endfirsthead

    \toprule
    & & \multicolumn{4}{c}{\textbf{Hurst Exponent: } $\mathscr{P}(.,1)$} & \multicolumn{4}{c}{\textbf{Hurst Exponent: } $\mathscr{C}(0,.)$} \\ 
    \cmidrule(lr){3-6} \cmidrule(lr){7-10}
   $\beta$ & $i$ & $\mathscr{P}(2.414,1)$ & $\mathscr{W}(1,1)$ & $\mathscr{W}(.5,1)$ & $\mathscr{F}(0,1,1)$ & $\mathscr{P}(2.414,1)$ & $\mathscr{W}(1,1)$ & $\mathscr{W}(.5,1)$ & $\mathscr{F}(0,1,1)$ \\
    \midrule
\endhead

    \multirow{6}{*}{2} 
    & 1 & 0.515 & 0.501 & 0.501 & 0.512 & 0.543 & 0.537 & 0.538 & 0.505 \\
    & 2 & 0.531 & 0.531 & 0.531 & 0.489 & 0.518 & 0.517 & 0.517 & 0.512 \\
    & 3 & 0.497 & 0.497 & 0.0.497 & 0.509 & 0.537 & 0.529 & 0.529 & 0.501 \\
    & 4 & 0.510 & 0.509 & 0.509 & 0.509 & 0.502 & 0.502 & 0.502 & 0.508 \\
    & 5 & 0.514 & 0.513 & 0.513 & 0.514 & 0.532 & 0.528 & 0.528 & 0.521 \\
    & 6 & 0.475 & 0.474 & 0.474 & 0.513 & 0.507 & 0.505 & 0.505 & 0.538 \\ \midrule

    \multirow{6}{*}{10} 
    & 1 & 0.520 & 0.495 & 0.495 & 0.513 & 0.540 & 0.534 & 0.535 & 0.498 \\
    & 2 & 0.532 & 0.532 & 0.532 & 0.487 & 0.517 & 0.517 & 0.517 & 0.507 \\
    & 3 & 0.498 & 0.498 & 0.498 & 0.528 & 0.532 & 0.532 & 0.532 & 0.503 \\
    & 4 & 0.495 & 0.495 & 0.495 & 0.511 & 0.495 & 0.495 & 0.495 & 0.513 \\
    & 5 & 0.499 & 0.499 & 0.499 & 0.531 & 0.530 & 0.530 & 0.530 & 0.515 \\
    & 6 & 0.478 & 0.478 & 0.478 & 0.499 & 0.531 & 0.531 & 0.531 & 0.520 \\ \midrule

    \multirow{6}{*}{20} 
    & 1 & 0.520 & 0.495 & 0.495 & 0.513 & 0.540 & 0.534 & 0.535 & 0.498 \\
    & 2 & 0.532 & 0.532 & 0.532 & 0.487 & 0.517 & 0.517 & 0.517 & 0.507 \\
    & 3 & 0.498 & 0.498 & 0.498 & 0.528 & 0.532 & 0.532 & 0.532 & 0.503 \\
    & 4 & 0.495 & 0.495 & 0.495 & 0.511 & 0.495 & 0.495 & 0.495 & 0.513 \\
    & 5 & 0.499 & 0.499 & 0.499 & 0.531 & 0.530 & 0.530 & 0.530 & 0.501 \\
    & 6 & 0.478 & 0.478 & 0.478 & 0.499 & 0.531 & 0.531 & 0.531 & 0.526 \\ 
    \bottomrule
\end{longtable}
\end{center}
It can be observed from Tables \ref{Table1} and \ref{Table2} that the values of the hurst exponent across a particular index under four different perturbations varies more when the underlying distribution is Cauchy rather than Pareto. The increased volatility of the hurst exponent under Cauchy noise highlights the sensitivity of long-range dependence estimators to distributions without moments, whereas the Pareto distribution - despite its heavy tail - retains a more stable extremal structure.
Whereas (see Tables \ref{Table3} and \ref{Table4}), independent Cauchy mixtures provide more stability under particular indices in case of short term memory retention and Pareto mixtures for long term memory retention. The overall observation is that the \enquote{memory} of a system is not just a function of time, but a function of the extremal structure of the noise and hence there is a fundamental trade-off between local volatility and global persistence driven by the tail-type of the distributions. Observations indicate a Tail-Induced Partition of Memory: while Cauchy-heavy mixtures exhibit superior stability in characterizing local fluctuations due to their symmetric extremal shocks, Pareto mixtures are more robust in preserving the structural integrity of long-term dependence. This suggests that in functional time series, the choice of the distributional prior should be dictated by the temporal scale of the intended inference

\begin{figure}[H]
\centering
\includegraphics[width=1\textwidth]{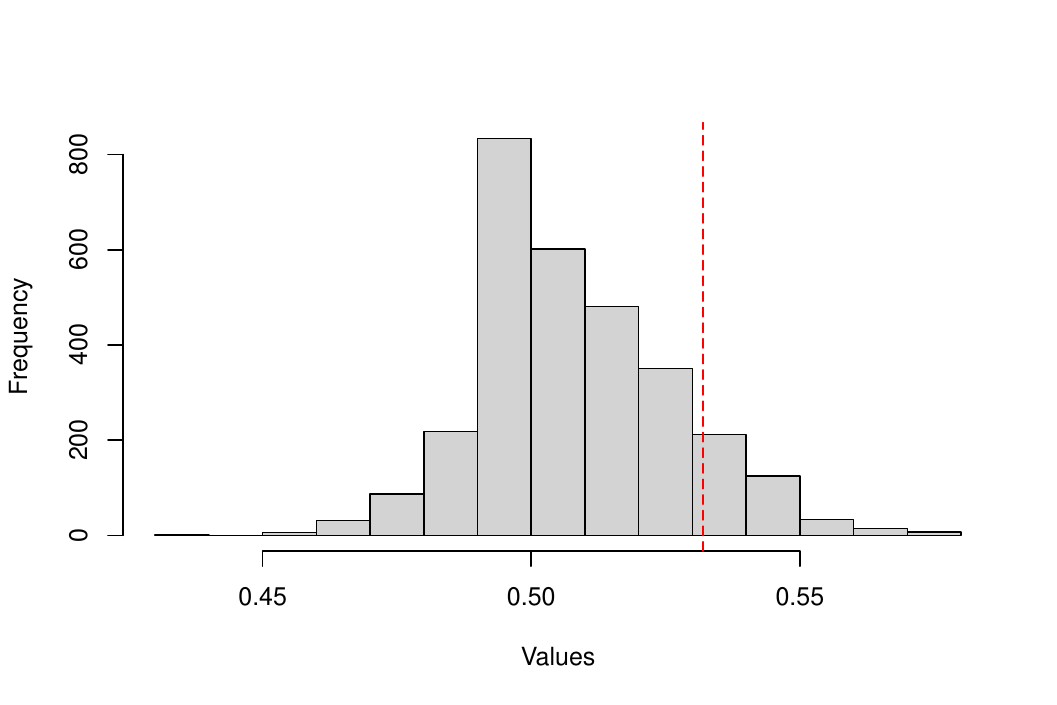}
\caption{Plot of the values of hurst exponent under Index (i=2) for $\beta$ =10 and $\mathscr{P}(3,1)$ innovation under $\mathscr{W}(0.5,1)$ perturbation}{\label{fig1}}
\end{figure}

It is an additional point of interest that the Hurst exponent for the non-iid setups are qualitatively similar to that of the iid settings, as predicted by our theoretical results in section \ref{TailCCTheory}. This is illustrated in Figure \ref{fig1}, which shows the plot of the hurst exponent for Index = 2 under iid $\mathscr{P}(3,1)$ with $\mathscr{W}(0.5,1)$ perturbation at $\beta=10$. The red dotted line is drawn at the value under non-iid setup of $\mathscr{P}$(.,1). Under all possible frameworks of both short memory and long memory, the estimates of the hurst exponent under the non iid case is contained within the band of the iid ones.

\section{Conclusion}
The quantitative analysis of financial time series is an important exercise for both academics as well as practitioners. Such analyses often reveal two distinct features that standard Gaussian frameworks fail to capture. One is the incidence of heavy-tailed marginal distributions and the second is the phenomenon of extreme co-movements. While extreme value theory characterizes marginal behavior, Copulas provide a convenient functional bridge to describe the dependence structure independently of the marginals. In this paper, we  proposed a different approach to looking at the joint extremes, through a novel dependence measure. The proposed idea incorporates both the non-identical and identical regularly varying distributions. The effect of persistence property have been thoroughly studied under these setups. 
The newly devised statistic seemed to be effective in determining the tail dependence. 

We have also carried out extensive simulations and the results obtained support our theoretical findings. An illustration of the same with real data has also been provided, using high-frequency cryptocurrency datasets on some common denominations. In the analysis of cryptocurrencies, we have observed that the three coins chosen indeed have tail dependence even at higher thresholds. At such thresholds, data becomes sparse so an obvious extension would be to look at its robust counterpart. An important question to ask is:  \enquote{When the data gets contaminated by outliers, heavy-tailed noise, or structural breaks, can still this dependence measure work or some loss quantification is required}. We will delve into these issues in future work.

\bibliography{sn-bibliography}
\section*{Appendix: Proofs}

    \begin{proof}[Proof of Theorem \ref{Thm}(a):]
    As $x \to \infty$, the tail probability of the linear combination of independent heavy-tailed random variables is dominated by the heaviest tail:
    \begin{align*}
        \mathbb{P}(\mathbf{l}'\mathbf{X} > x) &= \mathbb{P}(l_{1}X_{1} + \dots + l_{n}X_{n} > x) \\
        &\sim \mathbb{P}(l_{*}X_{*} > x)
    \end{align*}
    where $X_{*}$ is defined as the component $X_j$ such that its tail index is minimal:
    \[
    X_{*} = X_{j} \quad \text{where} \quad \alpha_{*} = \min_{j} \{\alpha_{j}\}
    \]
    \[
    \mathbb{P}(l_{*}X_{*} > x) \sim p(\frac{x}{l_{*}})^{-\alpha_{*}} L(\frac{x}{l_{*}}) 1(l_{*} > 0) + q(\frac{x}{|l_{*}|})^{-\alpha_{*}} L(\frac{x}{|l_{*}|}) 1(l_{*} < 0)
    \]
   
    Taking logarithm on both the sides yield the linear relationship in $\log(\frac{x}{l_{*}})$, consistent with power-law behavior.
    \end{proof}

     \begin{proof}[Proof of Theorem \ref{Thm}(b):]
\[
\mathbb{P}\left( n\max \{|l_{1}X_{1}| , \dots , |l_{n}X_{n}|\} > x \right) \geq \mathbb{P}(\mathbf{l}'\mathbf{X} > x)
\]
This implies,
\[
 \mathbb{P}(\mathbf{l}'\mathbf{X} \leq x) \geq \prod_{i=1}^{n}P(|l_iX_i| \leq \frac{x}{n}) 
\]

Hence,
\[
\log(\mathbb{P}(\mathbf{l}'\mathbf{X} > x) ) \leq log(1-\prod_{i=1}^{n}P(|l_iX_i|\leq \frac{x}{n}))=\sum_{i=1}^{n}-\alpha_{i} \log(\frac{x}{n|l^{*}|}) + \log(L(\frac{x}{n|l^*|}))
\]
where
\[
|l^{*}|=min|l_i| 
\]
and the overall product dominates  the various combinations of products.
     \end{proof}

 \begin{proof}[Proof of Theorem \ref{Thm}(c):]
 By Cauchy–Schwarz inequality 
 \[
 (\mathbf{l}'\mathbf{X})^{2}\leq(\mathbf{l}'\mathbf{l})(\mathbf{X}'\mathbf{X})
 \]
         \[
        \mathbb{P}(\mathbf{l}'\mathbf{X} > x) \leq  \mathbb{P}((\mathbf{l}'\mathbf{l})(\mathbf{X}'\mathbf{X}) >x^2)\leq \mathbb{P}(\sum_{i=1}^{n}X_i^2>\frac{x^2}{\sum_{i=1}^{n}l_i^2})\leq \sum_{i=1}^{n} \mathbb{P}(X_i^2 >\frac{x^2}{\sum_{i=1}^{n}l_i^2})
         \]
         Now, by tail balance condition we have,
         \[
         \sum_{i=1}^{n} \mathbb{P}(X_i^2 >\frac{x^2}{\sum_{i=1}^{n}l_i^2}) = \sum_{i=1}^{n} \mathbb{P}(|X_i|>\sqrt{\frac{x^2}{\sum_{i=1}^{n}l_i^2}})
         \] 
         \\
         \[
         log(\mathbb{P}(\mathbf{l}'\mathbf{X} > x)) \leq log(\sum_{i=1}^{n}\mathbb{P}(|X_i|>\sqrt{\frac{x^2}{\sum_{i=1}^{n}l_i^2})})=log\{a^{-\alpha_1}L_1(a)+a^{-\alpha_2}L_2(a)+..+a^{-\alpha_n}L_n(a)\}
         \] where, 
         \[
         a=\sqrt{\frac{x^2}{\sum_{i=1}^{n}l_i^2}}
         \]
 Define, \[ L(a)=\sup_{n}(L_n(a))\]  
         
         \[
         \leq log(L(a))+log\{a^{-\alpha_1}+..+a^{-\alpha_n}\} =log(L(a))+log(\frac{\sum_{i=1}^{n}a^{-\alpha_i}}{n}) + log(n)
         \]

         \[
     =log(L(a))+log(\frac{\sum_{i=1}^{n}e^{-\alpha_ilog(a)}}{n})+log(n)\]
     \[ \leq log(L(a))+log(\frac{\sum_{i=1}^{n}(1-\alpha_ilog(a)+\frac{1}{2}\alpha_i^2log(a)^2}{n})+log(n)
         \]
      Using,   \[
         log(1-a) \approx a \ \text{;} \ (log(a)^{2}) \approx log(a) \ \text{for large a}
        \] 
         \[
         \sim (\frac{\frac{1}{2}\sum_{i=1}^{n}\alpha_i^2-\sum_{i=1}^{n}\alpha_i}{n})log(a)+log(n)+log(L(a)) 
         \]
         \[
         =\sim (\frac{\frac{1}{2}\sum_{i=1}^{n}\alpha_i^2-\sum_{i=1}^{n}\alpha_i}{n})log(a)+log(n)+log(L(x)) 
         \] 
        as  L(.) is a regularly varying function at $\infty$.
     \end{proof}

     \begin{proof}[Proof of Theorem \ref{thm:IID1}:]
         
\[
\frac{P(b_0\epsilon_2^{(2)}+b_1\epsilon_1^{(2)}+b_2\epsilon_0^{(1)}+\ldots>y)}{P(a_0\epsilon_2^{(1)}+a_1\epsilon_1^{(1)}+a_2\epsilon_0^{(1)}+\ldots>x)} \leq \frac{P(|b_0\epsilon_2^{(2)}|+|b_1\epsilon_1^{(2)}|+|b_2\epsilon_0^{(1)}|+|.|+...>y)}{P(a_0\epsilon_2^{(1)}+a_1\epsilon_1^{(1)}+a_2\epsilon_0^{(1)}+\ldots>x)}
\]

\[
\leq
\frac{|b_2|^{\alpha_1}P(|\epsilon_0^{(1)}|>y)+\sum_{j=0, j \neq 2}^{\infty} |b_j|^{\alpha_2}P(|\epsilon_1^{(1)}|>y)}{\sum_{j=0}^{\infty}[p(b_j)_+^{\alpha_1}+q(b_j)_-^{\alpha_1}]P(|\epsilon_0^{(1)}|>x)}
\]

If x=y, then
\[
\tau_{x_n,y_n}(1)
=\frac{|b_2|^{\alpha_1}}{\sum_{j=0}^{\infty}[p(b_j)_+^{\alpha_1}+q(b_j)_-^{\alpha_1}]}+\frac{\sum_{j=0, j \neq 2}^{\infty} |b_j|^{\alpha_2}P(|\epsilon_1^{(2)}>x)}{\sum_{j=0}^{\infty}[p(b_j)_+^{\alpha_1}+q(b_j)_-^{\alpha_1}]P(|\epsilon_1^{(1)}>x)} 
\]
\[
=\frac{|b_2|^{\alpha_1}}{\sum_{j=0}^{\infty}[p(b_j)_+^{\alpha_1}+q(b_j)_-^{\alpha_1}]}+\frac{\sum_{j=0, j \neq 2}^{\infty} |b_j|^{\alpha_2}x^{-(\alpha_2-\alpha_1)}L_2(x)}{\sum_{j=0}^{\infty}[p(b_j)_+^{\alpha_1}+q(b_j)_-^{\alpha_1}]L_1(x)} 
\]

\[
\tau_{x_n,y_n}(1) \approx \frac{|b_2|^{\alpha_1}}{\sum_{j=0}^{\infty}[p(b_j)_+^{\alpha_1}+q(b_j)_-^{\alpha_1}]}+\frac{\sum_{j=0, j \neq 2}^{\infty} |b_j|^{\alpha_2}x^{-(\alpha_2-\alpha_1)}L_2(x)}{\sum_{j=0}^{\infty}[p(b_j)_+^{\alpha_1}+q(b_j)_-^{\alpha_1}]L_1(x)} 
\]
Similarly,
\[
 \tau_{x_n,y_n}(2)\approx \frac{|b_3|^{\alpha_1}}{\sum_{j=0}^{\infty}[p(b_j)_+^{\alpha_1}+q(b_j)_-^{\alpha_1}]}+\frac{\sum_{j=0, j \neq 3}^{\infty} |b_j|^{\alpha_2}x^{-(\alpha_2-\alpha_1)}L_2(x)}{\sum_{j=0}^{\infty}[p(b_j)_+^{\alpha_1}+q(b_j)_-^{\alpha_1}]L_1(x)}
\]
\\

Define
\[
\tau(2)-\tau(1)=\Delta
\]
Then,
\[
\Delta \propto (|b_3|^{\alpha_1}-|b_2|^{\alpha_1})+\frac{L_2(x)}{L_1(x)}x^{-(\alpha_2-\alpha_1)}|b_2|^{\alpha_2} 
\]
Now, under the following conditions \[
\tau(2)\leq \tau(1)  
\]
Exactly one of the following conditions must hold:

\begin{enumerate}
    \item $\alpha_2 > \alpha_1$
    \item The ratio of the slowly varying functions satisfies:
    \begin{equation}
        \frac{L_2(x)}{L_1(x)} = o\left(x^{-(\alpha_2 - \alpha_1)}\right) \quad \text{as } x \to \infty
    \end{equation}
\end{enumerate}

\noindent and additionally, the coefficients must satisfy:

\begin{enumerate}
    \setcounter{enumi}{2} 
    \item $\displaystyle \left| \frac{b_3}{b_2} \right| < 1$
\end{enumerate}

In general, for any $\tau(i+1)\leq \tau(i)$, condition (3) becomes \[
|\frac{b_{i+2}}{b_{i+1}}| < 1
\]

\end{proof}  

\begin{proof}[Proof of Lemma \ref{mem.lemma}]
\[
   \frac{\tau(2)}{\tau(1)} \approx 
\frac{\frac{|b_3|^{\alpha_1}}{D}+\frac{\sum_{j=0, j \neq 3}^{\infty}|b_j|^{\alpha_2}x^{-(\alpha_2-\alpha_1)}L_2(x)}{DL_1(x)}}  {\frac{|b_2|^{\alpha_1}}{D}+\frac{\sum_{j=0, j \neq 2}^{\infty} |b_j|^{\alpha_2}x^{-(\alpha_2-\alpha_1)}L_2(x)}{DL_1(x)}}
\approx \frac{|b_3|^{\alpha_1}+\delta(x)\sum_{j=0, j \neq 3}^{\infty}|b_j|^{\alpha_2}} {|b_2|^{\alpha_1}+\delta(x)\sum_{j=0, j \neq 2}^{\infty}|b_j|^{\alpha_2}}
\]

\[
\approx
|\frac{b_3}{b_2}|^{\alpha_1}[1+\delta(x)(\frac{\sum_{j=0, j \neq 3}^{\infty}|b_j|^{\alpha_1}}{|b_3|^{\alpha_1}}-   \frac{\sum_{j=0, j \neq 2}^{\infty}|b_j|^{\alpha_1}}{|b_2|^{\alpha_1}} ]
\]
 
Under the above noted conditions; 
\[
\frac{\tau(2)}{\tau(1)} \approx \frac{|b_3|}{|b_2|}^{\alpha_{1}}
\]
In general,
\[
\frac{\tau(i+1)}{\tau(i)} \approx \frac{|b_{i+2}|}{|b_{i+1}|}^{\alpha_{1}}
\]

Using the duality between AR(1) and MA$(\infty)$ representation:\\
Consider, 
\[
b_j=\phi^j \quad; \quad  |\phi|<1
\]
Hence,
\[
\frac{\tau(i+1)}{\tau(i)} \approx \frac{|b_{i+2}|}{|b_{i+1}|}^{\alpha_{1}} \approx |\phi|^{\alpha_1}
\]
This shows that for a standard linear process, the Tail Cross-Correlation Function decays exponentially with the lag $h$ at a rate of $|\phi|^{\alpha_1}$. Hence, short memory process.\\
On the contrary,

\[
b_j=j^{-\beta} 
\]
Hence,
\[
\frac{\tau(i+1)}{\tau(i)} \approx \frac{|b_{i+2}|}{|b_{i+1}|}^{\alpha_{1}} \approx \frac{|i+2|}{|i+1|}^{-\alpha_1 \beta}
\] 
\end{proof}
\begin{proof}[Proof of Theorem \ref{thm:NID1}]

\[
 \tau_{x_n,y_n}(1)\approx \frac{|b_2|^{\alpha_0}}{[p(a_1)_+^{\alpha_0}+q(a_1)_-^{\alpha_0}]}+\frac{\sum_{j=0, j \neq 2}^{\infty} |b_j|^{\alpha_{2-j}^{(2)}}P(|\epsilon_{2-j}^{(2)}|>x)}{[p(a_1)_+^{\alpha_0}+q(a_1)_-^{\alpha_0}]P(|\epsilon_0^{*}|>x)} 
\]
\[
 \tau_{x_n,y_n}(2)\approx \frac{|b_3|^{\alpha_0}}{[p(a_1)_+^{\alpha_0}+q(a_1)_-^{\alpha_0}]}+\frac{\sum_{j=0, j \neq 3}^{\infty}|b_j|^{\alpha_{3-j}^{(2)}}P(|\epsilon_{3-j}^{(2)}|>x)}{[p(a_1)_+^{\alpha_0}+q(a_1)_-^{\alpha_0}]P(|\epsilon_0^{*}|>x)} 
\]

Define, $\delta = \tau(2) -\tau(1)$
Then,
\[
\delta \approx \frac{|b_3|^{\alpha_0}-|b_2|^{\alpha_0}}{[p(a_1)_+^{\alpha_0}+q(a_1)_-^{\alpha_0}]}+\frac{\sum_{j=0, j \neq 3}^{\infty} |b_j|^{\alpha_{3-j}^{(2)}}P(|\epsilon_{3-j}^{(2)}|>x)-\sum_{j=0, j \neq 2}^{\infty} |b_j|^{\alpha_{2-j}^{(2)}}P(|\epsilon_{2-j}^{(2)}|>x)}{[p(a_1)_+^{\alpha_0}+q(a_1)_-^{\alpha_0}]P(|\epsilon_0^{*}|>x)} 
\]

In general,
\[
\delta(i+1,i)=\tau(i+1)-\tau(i) \approx \frac{|b_{i+2}|^{\alpha_0}-|b_{i+1}|^{\alpha_0}}{[p(a_1)_+^{\alpha_0}+q(a_1)_-^{\alpha_0}]}+\frac{\sum_{j=0, j \neq i+1}^{\infty} |b_j|^{\alpha_{i+1-j}^{(2)}}P(|\epsilon_{i+1-j}^{(2)}|>x)-\sum_{j=0, j \neq i}^{\infty}|b_j|^{\alpha_{i-j}^{(2)}}P(|\epsilon_{i-j}^{(2)}|>x)}{[p(a_1)_+^{\alpha_0}+q(a_1)_-^{\alpha_0}]P(|\epsilon_0^{*}|>x)} 
\]

Exactly one must hold
 
\[
 \text{(1)} \quad  \alpha_i^{(2)} < \quad \alpha_0 \quad \forall \quad  i \]
 \[
 \text{(2)}  \quad \frac{L_i(x)}{L(x)} =o(x^{-(\alpha_i^{(2)}-\alpha_0}) \quad \forall \quad i
\] 
\text{(3)}
Combination of  conditions (1) and  (2). 
Under the following assumptions;
\[
\delta <1 
\] if
\[
\frac{|b_3|^{\alpha_0}-|b_2|^{\alpha_0}}{[p(a_1)_+^{\alpha_0}+q(a_1)_-^{\alpha_0}]} <1
\]

In general, 
\[
\tau(i+1)-\tau(i) < 1
\]
if
\[
\frac{|b_{i+2}|^{\alpha_0}-|b_{i+1}|^{\alpha_0}}{[p(a_1)_+^{\alpha_0}+q(a_1)_-^{\alpha_0}]}<1
\] along with the assumptions.
\end{proof}

\[
\frac{\tau(i+1)}{\tau(i)} = \frac{|b_{i+2}|^{\alpha_0} P\left(|\epsilon_0^{*}| > x\right) + \sum_{\substack{j=0 \\ j \neq i+2}}^{\infty} |b_j|^{\alpha_{i+1-j}^{(2)}} P\left(|\epsilon_{i+1-j}^{(2)}| > x\right)}{|b_{i+1}|^{\alpha_0} P\left(|\epsilon_0^{*}| > x\right) +  \sum_{\substack{j=0 \\ j \neq i+1}}^{\infty} |b_j|^{\alpha_{i-j}^{(2)}} P\left(|\epsilon_{i-j}^{(2)}| > x\right)}
\]
Under conditions (1) and (2)

\begin{equation}
\frac{\tau(i+1)}{\tau(i)} \approx \left| \frac{b_{i+2}}{b_{i+1}} \right|^{\alpha_0}
\end{equation}

Hence, equivalence with Lemma \ref{mem.lemma}.
\begin{proof}[Proof of Theorem \ref{thm:IID2}]
    \[
\frac{P(b_0\epsilon_2^{(2)}+b_1\epsilon_1^{(2)}+b_2\epsilon_0^{(1)}+\ldots>y)}{P(a_0\epsilon_2^{(1)}+a_1\epsilon_1^{(1)}+a_2\epsilon_0^{(1)}+\ldots>x)} \leq \frac{P(|b_0\epsilon_2^{(2)}|+|b_1\epsilon_1^{(2)}|+|b_2\epsilon_0^{(1)}|+|.|+...>y)}{P(a_0\epsilon_2^{(1)}+a_1\epsilon_1^{(1)}+a_2\epsilon_0^{(1)}+\ldots>x)}
\]

\[
\leq
\frac{|b_2|^{\alpha_1}P(|\epsilon_0^{(1)}|>y)+\sum_{j=0, j \neq 2}^{\infty} |b_j|^{\alpha_2}P(|\epsilon_1^{(1)}|>y)}{\sum_{j=0}^{\infty}[p(b_j)_+^{\alpha_1}+q(b_j)_-^{\alpha_1}]P(|\epsilon_0^{(1)}|>x)}
\]
\[
\tau_{x_n,y_n}(1) \approx \frac{|b_2|^{\alpha_1}P(|\epsilon_0^{(1)}|>y)}{\sum_{j=0}^{\infty}[p(b_j)_+^{\alpha_1}+q(b_j)_-^{\alpha_1}]P(|\epsilon_0^{(1)}|>x)}+\frac{\sum_{j=0, j \neq 2}^{\infty} |b_j|^{\alpha_2}P(|\epsilon_1^{(1)}|>y)}{\sum_{j=0}^{\infty}[p(b_j)_+^{\alpha_1}+q(b_j)_-^{\alpha_1}]P(|\epsilon_0^{(1)}|>x)} 
\]
Similarly,
\[
 \tau_{x_n,y_n}(2)\approx \frac{|b_3|^{\alpha_1}P(|\epsilon_0^{(1)}|>y)}{\sum_{j=0}^{\infty}[p(b_j)_+^{\alpha_1}+q(b_j)_-^{\alpha_1}]P(|\epsilon_0^{(1)}|>x)}+\frac{\sum_{j=0, j \neq 3}^{\infty} |b_j|^{\alpha_2}P(|\epsilon_1^{(1)}|>y)}{\sum_{j=0}^{\infty}[p(b_j)_+^{\alpha_1}+q(b_j)_-^{\alpha_1}]P(|\epsilon_0^{(1)}|>x)}
\]
\end{proof}
Therefore, 
\[
\begin{gathered}
\delta(i+1,i)=\tau(i+1)-\tau(i) \\
\approx \frac{(|b_{i+2}|^{\alpha_0}-|b_{i+1}|^{\alpha_0})P(|\epsilon_0^{(1)}|>y)}{\sum_{j=0}^{\infty}[p(b_j)_+^{\alpha_1}+q(b_j)_-^{\alpha_1}]P(|\epsilon_0^{(1)}|>x)}+\frac{\sum_{j=0, j \neq i+1}^{\infty} (|b_j|^{\alpha_{i+1-j}^{(2)}}-\sum_{j=0, j \neq i}^{\infty}|b_j|^{\alpha_{i-j}^{(2)}})P(|\epsilon_1^{(1)}|>y)}{\sum_{j=0}^{\infty}[p(b_j)_+^{\alpha_1}+q(b_j)_-^{\alpha_1}]P(|\epsilon_0^{(1)}|>x)} 
\end{gathered}
\]
\[
= \frac{|b_{i+2}|^{\alpha_1} - |b_{i+1}|^{\alpha_1}}{\sum_{j=0}^{\infty} \left[ p(b_j)_{+}^{\alpha_1} + q(b_j)_{-}^{\alpha_1} \right]} \left( \frac{x}{y} \right)^{\alpha_1} \frac{L(y)}{L(x)} + \frac{|b_{i+1}|^{\alpha_2} - |b_{i+2}|^{\alpha_2}}{\sum_{j=0}^{\infty}  p(b_j)_{+}^{\alpha_1} + q(b_j)_{-}^{\alpha_1}} \frac{y^{-\alpha_2}}{x^{-\alpha_1}} \frac{L_1(y)}{L(x)}
\]
Hence, the conditions.\\
Ratio follows the same analogy as Theorem \ref{thm:IID1}.

\begin{proof}[Proof of Theorem \ref{thm:NID2}]
\[
\begin{gathered}
\delta(i+1,i)=\tau(i+1)-\tau(i) \\ \approx \frac{|b_{i+2}|^{\alpha_0}-|b_{i+1}|^{\alpha_0}P(|\epsilon_0^{*}|>y)}{[p(a_1)_+^{\alpha_0}+q(a_1)_-^{\alpha_0}]P(|\epsilon_0^{*}|>x)}+\frac{\sum_{j=0, j \neq i+2}^{\infty} |b_j|^{\alpha_{i+1-j}^{(2)}}P(|\epsilon_{i+1-j}^{(2)}|>y)-\sum_{j=0, j \neq i+1}^{\infty}|b_j|^{\alpha_{i-j}^{(2)}}P(|\epsilon_{i-j}^{(2)}|>y)}{[p(a_1)_+^{\alpha_0}+q(a_1)_-^{\alpha_0}]P(|\epsilon_0^{*}|>x)} 
\end{gathered}
\]
Hence, the conditions.\\
Ratio follows the same analogy as Theorem \ref{thm:NID1}.

\end{proof}
\end{document}